\documentclass{amsart} 
\usepackage{amsmath}
\usepackage{amsthm}
\usepackage{amsfonts}
\usepackage{amssymb}
\usepackage[nocompress,noadjust]{cite}
\usepackage{enumitem}
\usepackage{hyperref}
\usepackage{graphicx}
\usepackage{subcaption}

\graphicspath{ {./figs/} }

\usepackage{todonotes}

\theoremstyle{plain}
  \newtheorem{theorem}{Theorem}

  \newtheorem{proposition}[theorem]{Proposition}
  
  \newtheorem{lemma}[theorem]{Lemma}
  \newtheorem{corollary}[theorem]{Corollary}

\theoremstyle{definition}

  \newtheorem{remark}[theorem]{Remark}

\newcommand{\eps}{\varepsilon}
\newcommand{\C}{\mathbb{C}}
\newcommand{\R}{\mathbb{R}}
\newcommand{\T}{\mathbb{T}}
\renewcommand{\d}{\, d}
\newcommand{\Prob}{\mathbb{P}}
\newcommand{\E}{\mathbb{E}}
\newcommand{\eqd}{\overset{\mathrm{d}}{=}}

\newcommand{\monicp}{\widehat{p_n^{(k_n)}}}
\DeclareMathOperator{\Var}{Var}

\title[Repeated differentiation of random polynomials]{Repeated differentiation of random polynomials with i.i.d. rotationally invariant roots}

\author[S. O'Rourke]{Sean O'Rourke}
\address{Department of Mathematics\\ University of Colorado\\ Campus Box 395\\ Boulder, CO 80309-0395\\USA}
\email{sean.d.orourke@colorado.edu}

\date{\today} 
\subjclass[2020]{Primary 30C15; Secondary 60B10, 31A99}

\begin{document}

\begin{abstract}
Let $p_n$ be a random polynomial of degree $n$ whose roots are independent and identically distributed according to a rotationally invariant probability measure $\mu_0$ on the complex plane with finite logarithmic moment. If $k_n/n\to t\in(0,1)$, we prove that, as $n \to \infty$, the empirical zero measure of the $k_n$-th derivative of $p_n$ converges weakly in probability to a deterministic rotationally invariant probability measure. We describe the limiting measure explicitly in terms of the radial quantile function of $\mu_0$. This proves a conjecture of Hoskins and Kabluchko [\textit{Exp. Math.} 32 (2023), no.~4].
\end{abstract}

\maketitle
    
\section{Introduction}	

The relationship between the zeros of a polynomial and the zeros of its derivatives is classical.
For instance, the Gauss--Lucas theorem implies that every critical point of a polynomial lies in the convex hull formed from its zeros \cite{MR1954841}. 
It gives much less information, though, about the distribution of the critical points  inside the convex hull. 

In this paper, we study the distribution of zeros of random polynomials under repeated differentiation.  
In particular, we consider polynomials with independent and identically distributed (i.i.d.) rotationally invariant roots.
Let $\mu_0$ be a rotationally invariant probability measure on $\mathbb{C}$ that satisfies the following finite logarithmic moment condition:
\begin{equation} \label{eq:lm}
    \int_{\mathbb{C}} \log (1 + |z|) \,d\mu_0(z) < \infty. 
\end{equation}
Let $X_1, X_2, \ldots$ be independent random variables with common distribution $\mu_0$, and define the monic polynomial of degree $n$  
\begin{equation} \label{def:pn}
    p_n(z) = \prod_{j=1}^n(z - X_j), \quad z \in \C. 
\end{equation}
We let $\mu_n$ be the empirical root measure of $p_n$ given by 
\[ \mu_n = \frac{1}{n} \sum_{j=1}^n \delta_{X_j}, \]
where $\delta_z$ is a point mass at $z \in \mathbb{C}$.
Similarly, for any $0 \leq k \leq n-1$, we let $\mu_n^{(k)}$ be the empirical root measure of the $k$-th derivative $p_n^{(k)}$, i.e., 
\[ \mu_n^{(k)} = \frac{1}{n-k} \sum_{j=1}^{n-k} \delta_{W^{(k)}_{n, j}}, \]
where $W^{(k)}_{n,1}, \ldots, W^{(k)}_{n, n-k}$ are the roots of $p_n^{(k)}$ (counted with multiplicity). 
For completeness, when $k \geq n$, we take $\mu_n^{(k)} = \delta_0$. 

Our first main result establishes a deterministic limit for the distribution of the zeros when the number of derivatives is proportional to the degree. 

\begin{theorem} \label{thm:main}
    Let $\mu_0$ be a rotationally invariant probability measure on $\mathbb{C}$ that satisfies \eqref{eq:lm}. 
    Let $X_1, X_2, \ldots$ be independent random variables with common distribution $\mu_0$, and define $p_n$ as in \eqref{def:pn}.
    For any sequence $(k_n)$ of nonnegative integers with 
    \begin{equation} \label{eq:knlim}
        \frac{k_n}{n} \longrightarrow t \in (0, 1)    
    \end{equation}
    as $n \to \infty$, there exists a deterministic probability measure $\mu_t$, depending only on $\mu_0$ and $t$, so that $\mu_{n}^{(k_n)}$ converges weakly in probability to $\mu_t$ as $n \to \infty$.
\end{theorem}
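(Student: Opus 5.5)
We propose to prove Theorem~\ref{thm:main} through logarithmic potentials. Write $\monicp$ for the monic normalization of $p_n^{(k_n)}$. Then $\frac{1}{n-k_n}\log|\monicp(z)|$ is the log-potential of $\mu_n^{(k_n)}$. By the standard Tao--Vu type criterion (the replacement principle), weak convergence in probability to $\mu_t$ follows from two facts. First, for Lebesgue-a.e.\ $z$, $\frac{1}{n}\log|\monicp(z)|$ converges in probability to a deterministic function $(1-t)U_t(z)$. Second, these potentials satisfy a uniform local integrability (tightness) bound in $L^1_{\mathrm{loc}}$. Then $\mu_t = \frac{1}{2\pi}\Delta U_t$ in the distributional sense. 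Rotational invariance of $\mu_0$ will force $U_t$, and hence $\mu_t$, to be radial.

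\textbf{Pointwise limit.} Expand
\[
p_n^{(k)}(z) = k!\sum_{|S|=n-k}\prod_{j\in S}(z-X_j).
\]
Equivalently, $p_n^{(k)}(z)/k!$ is the elementary symmetric polynomial $e_{n-k}$ in the variables $z-X_j$. The main approach is to relate $e_{n-k}$ to a contour integral of the generating function
\[
\prod_{j=1}^n \bigl(1+w(z-X_j)\bigr)
\]
and evaluate it by saddle point. Since $\frac1n\sum_j \log|1+w(z-X_j)|$ concentrates around $\int \log|1+w(z-x)|\,d\mu_0(x)$, the candidate limit is a Legendre-type transform. Rotational invariance simplifies this transform: for $|z|$ fixed,
\[
\int\log|a-x|\,d\mu_0(x) = \int \log\max(|a|,|x|)\,d\mu_0(x).
\]
This suggests $U_t$ depends on $|z|$ only through the radial quantile function $R(u)$ of $\mu_0$. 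We would then guess an explicit formula in which the roots of modulus greater than roughly $|z|$ are "removed first" by differentiation. Concretely, we expect
\[
\tfrac1n\log|\monicp(z)| \approx \max_{\text{appropriate } S} \tfrac1n \sum_{j\in S}\log|z-X_j| + \text{combinatorial entropy terms},
\]
where the entropy terms come from $\log\binom{n}{k}$ and from $k!/(n)_k$.

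\textbf{Main obstacle: the lower bound.} The upper bound on $|e_{n-k}|$ is easy, since we can bound every term by its absolute value. The difficulty is the lower bound, which requires excluding massive cancellation in a random sum. Our first attempt would use the random phases provided by rotational invariance. Condition on the moduli $|X_j|$; the arguments are then i.i.d.\ uniform. Now view $e_{n-k}$ as a random multilinear polynomial in these phases. If $|z|$ is not an atom of the radial law, one dominant block of terms should carry most of the mass, either through a second-moment computation or through Hal\'asz/anti-concentration for multilinear forms. Anti-concentration then gives
\[
\Prob\bigl(|e_{n-k}| \le e^{-\eps n}\cdot \text{typical size}\bigr)\to 0.
\]
An alternative, likely cleaner route is a small-ball estimate with respect to a single phase variable. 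Because $e_{n-k}$ is affine in each $X_j$, fixing all but one phase gives a function $a + b X_j$. Its small-ball probability is controlled unless $b$ is also small, and this suggests an inductive argument.

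\textbf{Local integrability.} For the tightness condition, the upper bound follows from $|z|+|X_j|$ together with the log moment \eqref{eq:lm} and the law of large numbers. The lower tail in $L^1_{\mathrm{loc}}$ follows because $\log|\monicp|$ is a sum of $\log|z-W_j|$, which is integrable on compact sets uniformly once the roots are tight. Tightness of the roots holds by Gauss--Lucas together with the log moment, after a truncation argument.
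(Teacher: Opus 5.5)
Your outline has the paper's architecture: log-potentials, pointwise convergence in probability plus a local integrability bound, an upper bound from a Cauchy/saddle-point estimate, and a lower bound obtained by conditioning on the radii and treating $q_n(r)=e_{m_n}(r-R_1U_1,\ldots,r-R_nU_n)$ as a multi-affine polynomial in the phases. But the step that actually proves the theorem is missing. You never say what the lower bound is measured against, and you never show that it has the same exponential rate as the upper bound $v_t(r)=\inf_{\rho>0}\{L_0(r+\rho)-t\log\rho\}$. Without this you have a $\limsup$ and an unquantified $\liminf$, which does not give convergence to a deterministic limit, even for the bare existence claim of Theorem~\ref{thm:main}.

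In the paper, the phase expansion is $q_n(r)=\sum_{|S|\le m_n}a_{n,S}(r)U_S$ with $a_{n,S}(r)=(-1)^{|S|}\binom{n-|S|}{m_n-|S|}r^{m_n-|S|}R_S$. Iterating $\E\log|a+bU_1|=\log\max\{|a|,|b|\}$ gives $\E\log|f(U_1,\ldots,U_N)|\ge\log\max_S|a_S|$ for every nonzero multi-affine $f$. Efron--Stein gives $\Var\log|f(U_1,\ldots,U_N)|\le\pi^2N/12$, so with probability tending to one $\frac1n\log|q_n(r)|\ge\frac1n\log|a_{n,S_n}(r)|-\eps$. Taking $S_n$ to be the indices of the $\approx(1-\beta)n$ largest radii, an order-statistics law of large numbers gives the rate $B_t(\beta,r)$. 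The decisive identity $\sup_{t\le\beta\le1}B_t(\beta,r)=v_t(r)$ (Lemma~\ref{lem:vt-bt}) comes from a minimizer $\rho_\ast$ of $\rho\mapsto L_0(\rho)-t\log(\rho-r)$, the one-sided derivatives $L'_{0,-}(\rho)=F_0(\rho-)/\rho$ and $L'_{0,+}(\rho)=F_0(\rho)/\rho$, and the choice $\beta_\ast=t\rho_\ast/(\rho_\ast-r)$. Your single-phase inductive idea is the right seed, but it must become the inequality $\E\log|f|\ge\log\max_S|a_S|$ with an $O(n)$ variance bound, and then be matched with $v_t$. Nothing in the proposal does this.

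Two of the routes you float would in fact fail, for the same reason: the random phases cause exponential cancellation. First, bounding each term of $\sum_{|S|=m_n}\prod_{j\in S}(z-X_j)$ by its modulus gives $e_{m_n}(|z-X_1|,\ldots,|z-X_n|)$, which has the wrong rate. For $\mu_0$ uniform on the unit circle and $z=r\in(0,1)$ it is at least $\binom{n}{m_n}(1-r)^{m_n}$, whose rate tends to $-t\log t-(1-t)\log(1-t)>0$ as $r\downarrow0$, whereas $v_t(r)=-t\log(1-r)\to0$. The sharp upper bound needs the Cauchy estimate $|q_n(r)|\le\rho^{-k_n}\max_{|w-r|=\rho}|p_n(w)|$ together with an upper law of large numbers for $\frac1n\log|p_n(w)|$ that is uniform over the circle. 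That requires handling the logarithmic singularity; the paper truncates to $\max\{\log|\cdot|,-T\}$ and lets $T\to\infty$ by an upper-semicontinuity argument. Concentration at each fixed $w$ is not enough by itself.

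Second, a second-moment or Hal\'asz-type benchmark is wrong. Conditionally on the radii, $(\E|f|^2)^{1/2}=(\sum_S|a_S|^2)^{1/2}$ can exceed the typical size $\exp(\E\log|f|)$ by an exponential factor. Already $\prod_{j=1}^n(1+U_j)$ has $L^2$-norm $2^{n/2}$ while $\frac1n\log|\prod_j(1+U_j)|\to0$, and in the circle example $\sum_S|a_S|^2\ge\binom{n}{m_n}$; also no single block need dominate. The correct benchmark is $\max_S|a_S|$, and the correct tool is concentration of $\log|f|$, not anti-concentration of $f$. For the same reason your heuristic $\max_S\frac1n\sum_{j\in S}\log|z-X_j|$ is the wrong object: the relevant maximum is over coefficients in the phases, which are built from $\log R_j$ and $\log r$.

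A smaller point concerns the integrability step. The negative part needs no tightness, since $\int_K\log_-^2|z-w|\d\lambda(z)$ is bounded uniformly in $w$. Gauss--Lucas gives no tightness under a mere log moment: it only confines the roots to the convex hull of $X_1,\ldots,X_n$, whose diameter is unbounded. Instead, $|\monicp(z)|\le\prod_j\max\{1,|z-X_j|\}$ and Jensen's formula on $|z|=1$ give $\frac1{m_n}\sum_j\log_+|W^{(k_n)}_{n,j}|\le\frac1{m_n}\sum_j\log(1+|X_j|)$ (the paper uses Mahler's inequality). This yields both the $L^2(K)$ bound and the tightness that makes the limit a probability measure.
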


Our second main result gives an explicit description of the limiting distribution $\mu_t$. 
For $r \geq 0$, let 
\begin{equation} \label{def:F0}
    F_0(r) = \mu_0(\{z \in \mathbb{C} : |z| \leq r \})
\end{equation}
be the radial cumulative distribution function of $\mu_0$, and let 
\[ Q_0(u) = \inf \{ r \geq 0 : F_0(r) \geq u \}, \quad 0 < u < 1 \]
be its quantile function (generalized inverse). 

\begin{theorem}[Description of the limit] \label{thm:description}
    For any $0 < t < 1$, the limiting deterministic probability measure $\mu_t$ in Theorem \ref{thm:main} is rotationally invariant, and its radial cumulative distribution function
    \begin{equation} \label{def:Ft}
        F_t(r) = \mu_t( \{ z \in \mathbb{C} : |z| \leq r \})
    \end{equation}
    has quantile function
    \begin{equation} \label{def:qt}
            Q_t(u) = Q_0(t + (1-t) u) \frac{(1-t)u}{t + (1-t)u}, \quad 0 < u < 1.
    \end{equation}
    Equivalently, if $\xi_t$ is a random variable uniformly distributed on $(t, 1)$ and $U$ is an independent random variable uniformly distributed on the unit circle centered at the origin in the complex plane, then $\mu_t$ is the distribution of the random variable 
    \begin{equation} \label{eq:mut}
        Q_0(\xi_t) \frac{\xi_t - t}{\xi_t} U.         
    \end{equation}
\end{theorem}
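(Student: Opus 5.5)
Theorem \ref{thm:description} can be proved via logarithmic potentials. Write $\monicp$ for the monic normalization of $p_n^{(k_n)}$. It suffices to identify, for Lebesgue-a.e.\ $z$, the in-probability limit of
\[
\frac{1}{n}\log\big|\monicp(z)\big| .
\]
For a rotationally invariant measure $\nu$ with radial distribution function $F$, the potential $\int\log|z-w|\,d\nu(w)$ equals $\int\log\max(|z|,|w|)\,d\nu(w)$. This is a radial function whose derivative in $r=|z|$ is $F(r)/r$. So the limit potential determines $F_t$ through $r\,\partial_r$. The plan is therefore to compute
\[
\lim_{n\to\infty}\frac{1}{n-k_n}\log\big|p_n^{(k_n)}(z)\big|
\]
up to the additive normalizing constant coming from the leading coefficient $n!/(n-k_n)!$. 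We then differentiate in $r$ and check that the result matches the quantile formula \eqref{def:qt}.

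\emph{Step 1: identify the dominant terms.} Expand
\[
p_n^{(k)}(z)=k!\sum_{|S|=n-k}\ \prod_{j\notin S}(-X_j)^{0}\cdots,
\]
that is, as $k!$ times the sum over subsets $S$ of size $n-k$ of $\prod_{j\in S}(z-X_j)$, or equivalently in terms of elementary symmetric polynomials in the roots. Heuristically, for $|z|=r$ the sum is dominated by the subsets that remove the $k$ roots for which $|z-X_j|$ is smallest. By rotational invariance and the law of large numbers, these are, with high probability, the roots of modulus close to $r$; but this heuristic is too crude.

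A cleaner route is to work directly with the coefficients. Write
\[
p_n(z)=\sum_m (-1)^{n-m}e_{n-m}(X)\,z^m .
\]
Independent rotationally invariant roots make the elementary symmetric polynomials behave like $e_j\approx$ the product of the $j$ largest $|X_i|$ times a random phase. More precisely, the logarithmic growth is
\[
\frac{1}{n}\log|e_{\lfloor sn\rfloor}|\to\int_{1-s}^1\log Q_0(u)\,du .
\]
This is a Kabluchko-type result for random polynomials with independent-ish coefficients. Then
\[
p_n^{(k)}(z)=\sum_{m\ge k}\frac{m!}{(m-k)!}\,c_m z^{m-k},
\]
and on the exponential scale the $\log|\cdot|/n$ of this sum is the maximum over $m$ of the exponents. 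These exponents are
\[
\log\frac{m!}{(m-k)!}\approx n\big[\alpha\log\alpha-(\alpha-t)\log(\alpha-t)\big]
\]
with $\alpha=m/n$, plus $\int_{\alpha}^{1}\log Q_0(v)\,dv$ (from the coefficient growth), plus $(\alpha-t)\log r$. The first step, then, is to justify that the sum concentrates at this maximizer, so that $\frac1n\log|p_n^{(k_n)}(z)|\to\sup_{\alpha\in[t,1]}\Phi_r(\alpha)$. The upper bound is easy from the coefficient asymptotics. The lower bound, excluding cancellation, should use the independent random phases and an anti-concentration argument for a.e.\ $z$, as in the proof of Theorem \ref{thm:main}, which I take as given, together with the existence of the limit.

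\emph{Step 2: solve the variational problem.} Differentiating $\Phi_r$ in $\alpha$ gives the first-order condition
\[
\log\frac{\alpha}{\alpha-t}-\log Q_0(\alpha)+\log r=0,
\]
that is,
\[
r=Q_0(\alpha)\,\frac{\alpha-t}{\alpha}.
\]
By the envelope theorem, $r\,\partial_r\sup_\alpha\Phi_r=\alpha^*(r)-t$. Normalizing by $1-t$ gives
\[
F_t(r)=\frac{\alpha^*(r)-t}{1-t}.
\]
Setting $u=F_t(r)$, so that $\alpha^*=t+(1-t)u$, yields exactly \eqref{def:qt}. Since $Q_0(\alpha)(\alpha-t)/\alpha$ is nondecreasing in $\alpha$, the maximizer is monotone; where it is not unique or is a jump, the generalized inverse handles it. Rotational invariance of $\mu_t$ follows because the limiting potential depends only on $|z|$. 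The equivalent description \eqref{eq:mut} is just the quantile transform $u\mapsto Q_t(u)$ applied to $u$ uniform on $(0,1)$, with the substitution $\xi_t=t+(1-t)u$.

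\emph{Main obstacle.} I expect the hardest step to be the lower bound in Step 1 under only the logarithmic moment condition \eqref{eq:lm}. We need the coefficient asymptotics for $e_j$ uniformly in $j$ near the maximizer, and we must rule out cancellation among the terms. My first attempt would be to condition on the moduli $|X_j|$ and use only the independent uniform phases. I would then compare $e_j$ to its largest monomial, using a second-moment or Littlewood–Offord bound to show that $|e_j|$ is not exponentially smaller than $\big(\E|e_j|^2\mid\text{moduli}\big)^{1/2}$. The behaviour of $\log Q_0$ near $u\to0$ (roots near $0$) and near $u\to1$ is controlled by \eqref{eq:lm} via truncation.
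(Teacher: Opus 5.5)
Your Step 2 is sound. Up to an additive constant, your $\Phi_r(\alpha)$ is the paper's $B_t(\alpha,r)$. Because $f_t(u)=Q_0(u)\frac{u-t}{u}$ is nondecreasing, there is an exact identity $\sup_{\beta\in[t,1]}\bigl[(\beta-t)\log r+\int_\beta^1\log f_t(u)\,du\bigr]=\int_t^1\log\max\{r,f_t(u)\}\,du$. The right-hand side is $(1-t)$ times the potential of the law of \eqref{eq:mut}. This identity is a rigorous replacement for your first-order/envelope computation, with no differentiability or inversion issues, and Proposition \ref{prop:inverse} then yields $Q_t$.

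The genuine gap is in Step 1: the lower bound $\liminf\frac1n\log|p_n^{(k_n)}(r)|\ge\sup_\alpha\Phi_r(\alpha)$ (after normalization). This is the heart of the matter, since an upper bound on the potential cannot by itself identify the limit. You defer it to ``the proof of Theorem \ref{thm:main}'', but this lower bound \emph{is} the substance of that proof, and the statement of Theorem \ref{thm:main} alone does not identify $\mu_t$.

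Your instinct to condition on the moduli and compare with the largest monomial is right. The benchmark you then name, the conditional RMS, is wrong: $|e_j|$ is typically \emph{exponentially smaller} than $(\E[|e_j|^2\mid R_1,\dots,R_n])^{1/2}=e_j(R_1^2,\dots,R_n^2)^{1/2}$. For $\mu_0$ uniform on the unit circle, $\E|e_{sn}|^2=\binom{n}{sn}=e^{nH(s)+o(n)}$. Yet Cauchy's estimate on $|z|=1$ together with Lemma \ref{lem:lln} gives $\frac1n\log|e_{sn}|\le o(1)$ almost surely. The second moment counts the entropy of the many comparable monomials, which the random phases cancel.

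Two further problems remain in Step 1. Even sharp asymptotics for each $e_j$ would not rule out cancellation among the terms of $\sum_m\frac{m!}{(m-k)!}c_m r^{m-k}$, which you leave open. Also, the coefficients $e_j(X)$ are strongly dependent, so results for independent coefficients do not apply.

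The missing idea is to skip coefficient asymptotics entirely. Conditionally on the radii, view $q_n(r)=p_n^{(k_n)}(r)/k_n!$ as a multi-affine polynomial $\sum_S a_{n,S}(r)U_S$ in the phases, with $a_{n,S}(r)=(-1)^{|S|}\binom{n-|S|}{m_n-|S|}r^{m_n-|S|}R_S$.
\begin{itemize}
\item Iterating Jensen's formula $\E\log|a+bU_1|=\log\max\{|a|,|b|\}$ one variable at a time gives $\E\log|f(U)|\ge\log\max_S|a_S|$ for any nonzero multi-affine $f$.
\item Efron--Stein gives $\Var\log|f(U)|\le Cn$.
\item By Chebyshev, $\frac1n\log|q_n(r)|\ge\frac1n\log|a_{n,S}(r)|-\eps$ with high probability for \emph{any} $S$ with $a_{n,S}(r)\neq0$ (Lemma \ref{lem:lower-bound}).
\item Take $S$ to be the indices of the roughly $(1-\beta)n$ largest radii. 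Lemma \ref{lem:order-stats} then gives $\frac1n\log|a_{n,S}(r)|\to B_t(\beta,r)$.
\end{itemize}
This handles the coefficient size and the cancellation in one stroke.

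The matching upper bound is also cleaner via Cauchy's estimate on $|w-r|=\rho$. It gives $\inf_{\rho>0}\{L_0(r+\rho)-t\log\rho\}$, which equals $\sup_\beta B_t(\beta,r)$ by a duality argument using the one-sided derivatives $F_0(\rho\pm)/\rho$ of $L_0$.

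Finally, your reduction ``it suffices to identify the pointwise limit'' needs a uniform $L^2_{\mathrm{loc}}$ bound on $L_n^{(k_n)}$ to pass from potentials to measures. Gauss--Lucas only bounds $\frac{1}{m_n}\sum_j\log_+|W^{(k_n)}_{n,j}|$ by $\max_i\log_+|X_i|$, which is unbounded under \eqref{eq:lm} when $\mu_0$ has unbounded support. Mahler's inequality instead gives $\sum_j\log_+|W^{(k_n)}_{n,j}|\le\sum_j\log_+|X_j|$, which suffices.
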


\begin{remark}
    While finalizing this manuscript, we became aware that Jalowy was independently working on the same problem \cite{jalowy}. 
    We obtained our results independently, without knowledge of each other's work, and we have coordinated to release our preprints simultaneously on the arXiv.
    Jalowy proves results similar to Theorems \ref{thm:main} and \ref{thm:description} when $\mu_0$ is compactly supported and either a rotational invariance condition holds or a certain uniqueness criterion is satisfied.  
    In particular, the methods and results of Jalowy also apply to the case of the heat flow of random polynomials.
\end{remark}

Before presenting some examples, we make a few remarks concerning the main results. 
Both Theorems \ref{thm:main} and \ref{thm:description} can be extended to the case $t = 0$ with only minor modifications to the proof. 
The recent result of Zhu \cite{zhu2026convergencesmallorderderivativesrandom} handles the $t=0$ regime without requiring any assumptions on $\mu_0$, so we have not pursued this case here.
The regime $t = 1$ is also interesting, where one can consider a growing number of remaining roots (e.g., $k_n = n - \log n$) or only finitely many remaining roots (e.g., $k_n = n - 10$). 
While some of the methods used in this work could be applied to the former case, the latter likely requires new methods. 

The logarithmic moment condition \eqref{eq:lm} appears naturally since the proof is based on logarithmic potential theory (see Section \ref{sec:overview} for an overview of the proof).
It remains an open question whether this condition can be relaxed or removed completely. 
Without the rotational invariance condition, we still expect a version of Theorem \ref{thm:main} to hold, but the description in Theorem \ref{thm:description} should no longer be true in general. 

When $\mu_0$ has a point mass of weight $p \in [0, 1]$ at the origin, Theorem \ref{thm:description} implies that the limiting distribution $\mu_t$ will have a point mass of weight $\frac{\max\{p - t, 0\}}{1-t}$ at the origin, as expected. 

It is also natural to ask whether Theorems \ref{thm:main} and \ref{thm:description} can be extended to the situation when the roots are no longer independent or identically distributed. 
One interesting case is that of the characteristic polynomial of random matrices. 
However, the proof method used in this work requires both independence and identical distribution of the roots.

Let us now consider a few examples.  
We start with the case when the radial distribution of $\mu_0$ is discrete, in fact, just a single point mass.  
If $\mu_0$ is uniform on the unit circle (so that its radial distribution is just a point mass at $1$), then $Q_0(u)=1$ and
\[ Q_t(u) = \frac{(1-t)u}{t+(1-t)u}, \quad 0 < u < 1.\]
A numerical simulation of this example (when $t = 1/2$) is given in Figure \ref{fig:circle}. 
\begin{figure}
    \centering
    \begin{subfigure}[t]{0.48\textwidth}
        \centering
        \includegraphics[width=\textwidth]{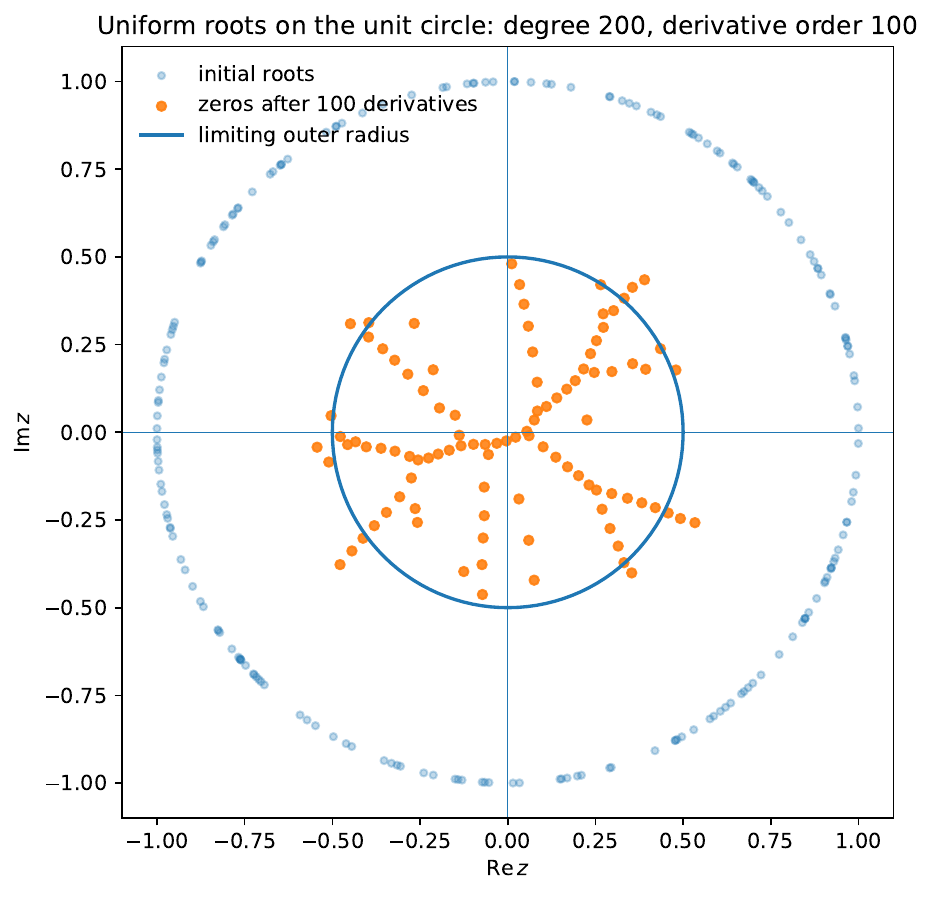}
        \caption{Initial roots and differentiated roots in the complex plane.}
    \end{subfigure}
    \hfill
    \begin{subfigure}[t]{0.48\textwidth}
        \centering
        \includegraphics[width=\textwidth]{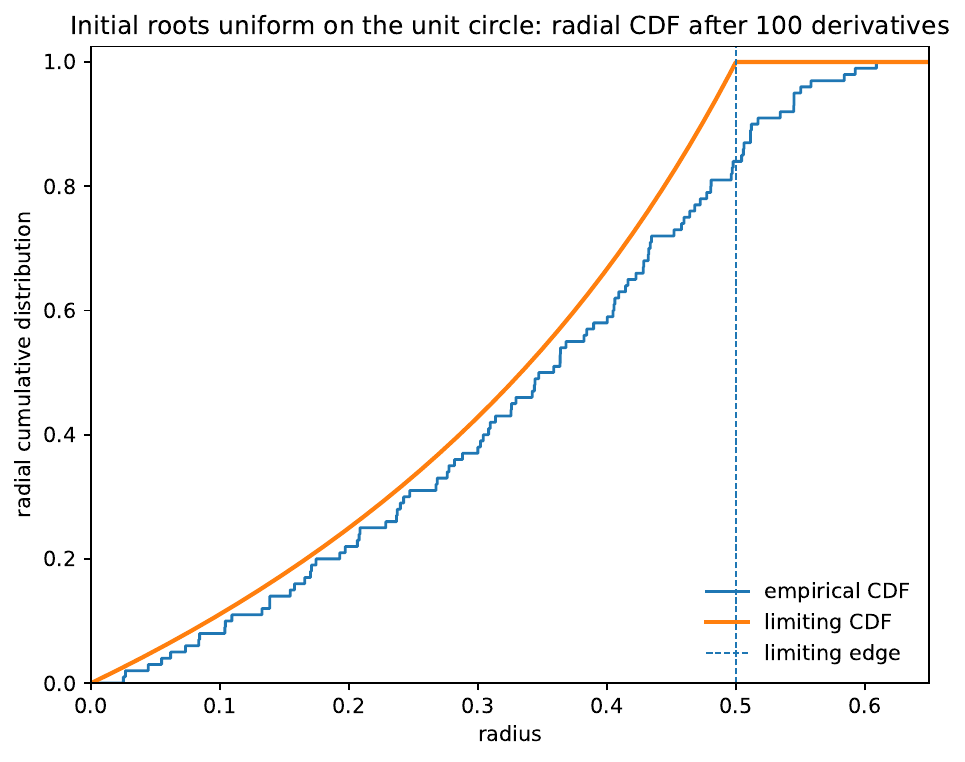}
        \caption{Empirical radial CDF of the differentiated roots compared to the limiting prediction. }
    \end{subfigure}
    \caption{Numerical simulations of Theorems \ref{thm:main} and \ref{thm:description} when $\mu_0$ is the uniform distribution on the unit circle with $n=200$ and $k=100$. }
    \label{fig:circle}
\end{figure}

For the second example, we consider the case when $\mu_0$ is absolutely continuous with respect to the Lebesgue measure on $\C$. 
Take $\mu_0$ to be the uniform distribution on the unit disk.
In this case, $Q_0(u)=\sqrt{u}$ and
\[ Q_t(u) = \frac{(1-t)u}{\sqrt{t+(1-t)u}}, \quad 0 < u < 1. \]
Figure \ref{fig:disk} provides a numerical simulation of this example. 
\begin{figure}
    \centering
    \begin{subfigure}[t]{0.48\textwidth}
        \centering
        \includegraphics[width=\textwidth]{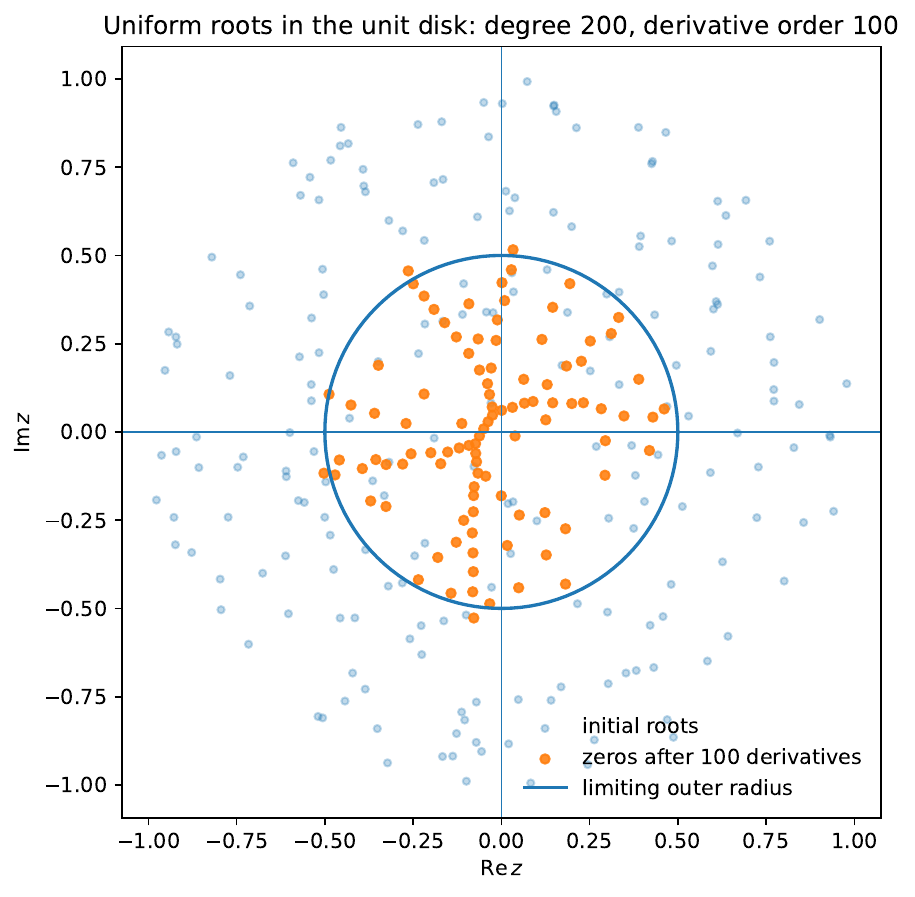}
        \caption{Initial roots and differentiated roots in the complex plane.}
    \end{subfigure}
    \hfill
    \begin{subfigure}[t]{0.48\textwidth}
        \centering
        \includegraphics[width=\textwidth]{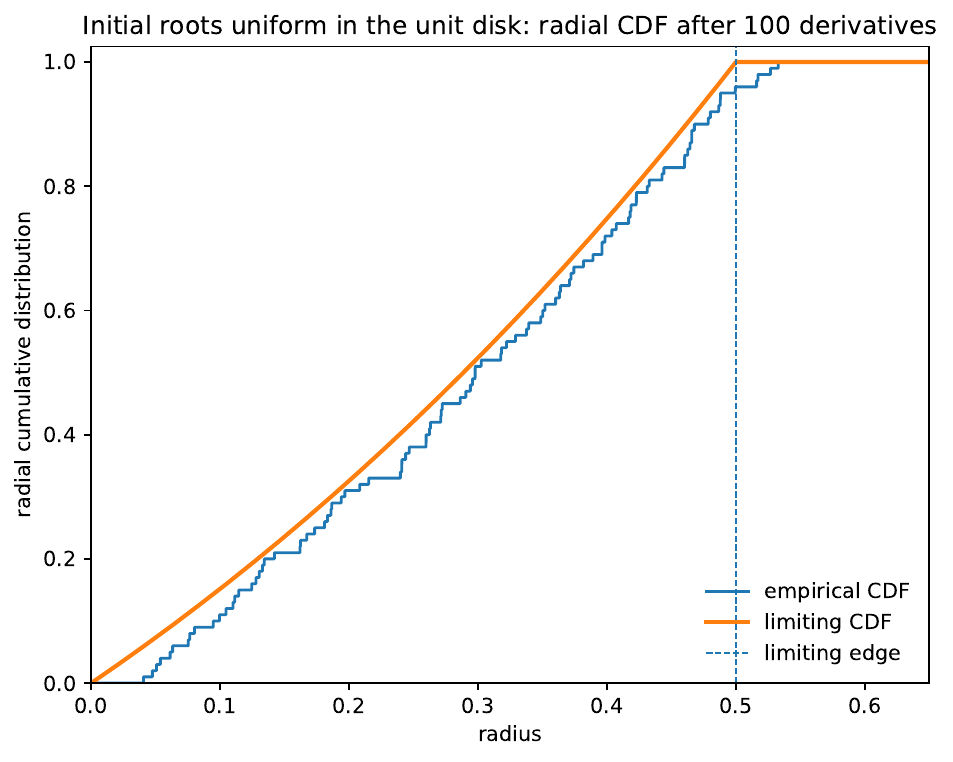}
        \caption{Empirical radial CDF of the differentiated roots compared to the limiting prediction. }
    \end{subfigure}
    \caption{Numerical simulations of Theorems \ref{thm:main} and \ref{thm:description} when $\mu_0$ is the uniform distribution on the unit disk with $n=200$ and $k=100$. }
    \label{fig:disk}
\end{figure}

\subsection{Background and related results}

The study of critical points of polynomials from model \eqref{def:pn} with i.i.d. roots was initiated by Pemantle and Rivin \cite{MR3363974}.  
They conjectured that, for an arbitrary probability measure $\mu_0$ on $\C$, the empirical measure formed from the critical points of $p_n$ converges to $\mu_0$, and proved the conjecture under a finite-energy condition on $\mu_0$.  
Subramanian \cite{MR2970701} studied the conjecture for measures supported on the unit circle, and Kabluchko \cite{MR3283656} proved the conjecture for an arbitrary probability measure on $\C$.  
Since these early works, many results related to the roots of derivatives of this model have appeared. 
In this section, we discuss some of these, with an emphasis on those most related to Theorems \ref{thm:main} and \ref{thm:description}; we have not attempted to cite all works on this model or provide a complete history. 

Extensions of these early works to cases in which the roots need not be independent or identically distributed, such as the case of characteristic polynomials of random matrices, have also been obtained; see \cite{MR3567254,MR3698743,MR3689967} for some specific examples.
Another line of work studies the local relationship between individual roots and nearby critical points.  Under suitable regularity assumptions, roots and critical points can often be paired, and the fluctuations of these pairings can be described; see \cite{MR3940764,MR3896083,MR4136480,MR3689975} and the references therein.  

A fixed number of derivatives was studied by Cheung, Ng, Tsai, and Yam \cite{MR3318313} for roots on the unit circle and by Byun, Lee, and Reddy \cite{MR4474893} in several more general models, including the model with independent roots.  
Almost sure convergence for the first derivative was established by Angst, Malicet, and Poly \cite{MR4711583}, and Michelen and Vu \cite{MR4762159} obtained almost sure convergence for every fixed number of derivatives.

The problem becomes more delicate when the number of derivatives grows with the degree.  
Michelen and Vu \cite{MR4741259} proved that the empirical root measure converges to $\mu_0$ when
\[
    k_n\leq \frac{\log n}{5\log\log n}.
\]
Angst, Nguyen, and Poly \cite{angst2026convergencehigherderivativesrandom} extended this to $k_n=o(n/\log n)$ for a broad class of root distributions.  
The general $k_n = o(n)$ case without any assumptions on the root distribution $\mu_0$ was completed recently by Zhu \cite{zhu2026convergencesmallorderderivativesrandom}. 
These results show that a sub-linear number of derivatives does not change the global root distribution.  

For polynomials with real roots, the linear number of derivatives case was investigated by Steinerberger \cite{MR4011508,MR4669280}.
He derived a nonlocal transport equation for the limiting density and made connections to free probability theory.
A formal proof was given by Hoskins and Kabluchko \cite{MR4669281}. 
In addition, a proof involving finite free probability was presented in \cite{MR4586815}. 
We also refer the reader to \cite{MR4458083} for more analysis of the resulting transport equation.  

The complex and rotationally invariant setting was investigated in \cite{MR4242313}, where a nonlocal radial transport equation was proposed to describe the evolution of roots under repeated differentiation.  
Hoskins and Kabluchko \cite{MR4669281} derived the closed form radial evolution appearing in \eqref{def:qt}. In particular, they conjectured that this formula describes the limiting roots of the $\lfloor tn\rfloor$-th derivative for the model in \eqref{def:pn} with i.i.d. rotationally invariant roots. 
Theorems \ref{thm:main} and \ref{thm:description} prove this conjecture under the  finite logarithmic moment assumption appearing in \eqref{eq:lm}.

Several related results have also been recently obtained for various polynomial models. 
Campbell, Renfrew, and the author \cite{MR4770365} study repeated differentiation for classes of random polynomials with independent coefficients and connect the limiting radial dynamics to free probability.  
Another perspective is offered by Hall, Ho, Jalowy, and Kabluchko \cite{MR5062750} in their study of repeated differentiation and repeated fractional differential operators.   
See also the work of Kabluchko \cite{MR5029824} for the behavior of zeros of trigonometric polynomials under repeated differentiation. 
Galligo, Najnudel, and Vu \cite{galligo2025dynamicsrotationallyinvariantpolynomial}, Najnudel and Vu \cite{najnudel2026rootdynamicsdifferentiatedpolynomials}, and Hall and Perales \cite{hall2026repeateddifferentiationdeterministicpolynomials} proved versions of the evolution for structured root configurations arranged on concentric circles.  
Randomized variants of differentiation have also been studied in \cite{MR4791426,MR4992002}.  

Another extreme regime arises when only finitely many roots remain. Hoskins and Steinerberger \cite{MR4447137} showed that, for independent real roots with suitable moment assumptions, sufficiently high derivatives exhibit Hermite and semicircle universality after appropriately rescaling.  
For extensions, the reader is referred to \cite{arizmendi2025criticalpointsrandompolynomials}. 

We also note some related results concerning the evolution of the zeros under the heat flow for various polynomial models; see, for example, \cite{MR4975505, MR4911809, MR4912666, MR5122040} and references therein.

\subsection{Overview} \label{sec:overview}

The rest of the paper is devoted to the proofs of Theorems \ref{thm:main} and \ref{thm:description}. 
The proofs involve studying the convergence of the logarithmic potential 
\[ L_n^{(k_n)}(z) = \frac{1}{n-k_n} \sum_{j=1}^{n-k_n} \log |z - W_{n,j}^{(k_n)}|, \quad z \in \C, \]
where $W_{n, 1}^{(k_n)}, \ldots, W_{n, n-k_n}^{(k_n)}$ are the roots of $p_n^{(k_n)}$.
We will prove an upper bound for $\limsup_{n \to \infty} L_n^{(k_n)}(z)$ and a lower bound for $\liminf_{n \to \infty} L_n^{(k_n)}(z)$. 
These two bounds take very different forms, and a large part of the argument will be showing that the upper bound and lower bound are actually the same.

We note that the upper bound is obtained almost surely. 
However, the lower bound relies on the one-sided concentration inequality in Lemma \ref{lem:lower-bound}. 
Unfortunately, this result is not strong enough to allow us to show that the lower bound holds almost surely.
A sufficiently strong improvement of this estimate could be used to prove an almost sure weak convergence version of Theorem \ref{thm:main}. 

The paper is organized as follows. 
In Section \ref{sec:prelim}, we will establish the notation, definitions, and tools for the proofs; in particular, this section introduces the necessary logarithmic potential definitions and preliminary results required in the proofs.
We prove Theorems \ref{thm:main} and \ref{thm:description} in Section \ref{sec:proof}.

\subsection*{Acknowledgments}

The author thanks Andrew Campbell, David Renfrew, and Noah Williams for many useful discussions concerning the random polynomial model considered here and Jonas Jalowy for coordinating the releases of the preprints. 

The author has been partially supported by NSF CAREER grant DMS-2143142. 
This work was also supported in part by a grant of access to OpenAI models through the ChatGPT for Academic Researchers program.

\section{Preliminaries} \label{sec:prelim}

This section introduces the necessary notation, definitions, and tools required for the proofs of Theorems \ref{thm:main} and \ref{thm:description}. 

\subsection{Notation}
We denote the imaginary unit by $i$.
The function $\log (\cdot)$ refers to the natural logarithm.  
We define its positive part as
\[ \log_+(x) = \begin{cases}
    \log x & \text{ if } x > 1, \\
    0 & \text{ if } 0 \leq x \leq 1.
\end{cases}\]

For a positive integer $N$, we let $[N] = \{1, \ldots, N\}$ be the discrete interval.  We let $|S|$ denote the cardinality of the finite set $S$.

We use $X \eqd Y$ to denote that the two random variables $X$ and $Y$ have the same distribution. 
We let $\lambda$ denote the Lebesgue measure on $\C$.
For any Borel set $K \subset \C$ and any measurable function $f: K \to \C$, we let $\|f\|_{L^2(K)}$ denote the $L^2$-norm of $f$ defined as
\[ \|f\|_{L^2(K)} = \left( \int_{K} |f(z)|^2 \d \lambda(z) \right)^{1/2}. \]

\subsection{Quantile functions}

Let $Y$ be a real random variable taking only non-negative values.
If 
\[ F_Y(r) = \Prob(Y \leq r), \quad r \geq 0, \]
is the cumulative distribution function (CDF) of $Y$, then the quantile function $Q_Y: [0, 1] \to [0, \infty]$ for (the distribution of) $Y$ is given by 
\begin{equation} \label{def:quantile}
    Q_Y(u) = \inf \{ r \geq 0 : F_Y(r) \geq u\}. 
\end{equation}
Here, we use the convention that $\inf \emptyset = \infty$.
We recall the following standard properties for the quantile function $Q_Y$.
\begin{proposition}[Properties of quantile functions] \label{prop:quantile}
    If $Y$ is a real random variable taking only non-negative values, then the following properties hold. 
    \begin{enumerate}
        \item $Q_Y(0) = 0$.
        \item $Q_Y$ is finite on $[0, 1)$.
        \item $Q_Y$ is non-decreasing on $[0, 1]$ and left-continuous on $(0, 1)$.
        \item $Q_Y(1) = \lim_{u \uparrow 1} Q_Y(u)$, where the limit always exists but may be infinite.
        \item If $\xi_0$ is a random variable uniformly distributed on $(0, 1)$, then 
        \begin{equation*} 
            Y \eqd Q_Y(\xi_0). 
        \end{equation*}
    \end{enumerate}
\end{proposition}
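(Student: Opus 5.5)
The plan is to verify the five properties in turn, directly from the definition \eqref{def:quantile} and the basic properties of the CDF $F_Y$. The CDF is non-decreasing and right-continuous on $[0,\infty)$, and it satisfies $F_Y(r)\to 1$ as $r\to\infty$.

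\emph{Properties (1) and (2).} For (1), note that $F_Y(0)\geq 0$. Hence $r=0$ belongs to the set $\{r\geq 0: F_Y(r)\geq 0\}$, and so $Q_Y(0)=0$. For (2), fix $u<1$. Since $F_Y(r)\to 1>u$, there is some finite $r$ with $F_Y(r)\geq u$. The defining set is therefore nonempty and $Q_Y(u)<\infty$.

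\emph{Property (3).} Monotonicity is immediate, because the set $\{r: F_Y(r)\geq u\}$ shrinks as $u$ increases, so its infimum can only grow.

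The key preliminary is the Galois-type equivalence
\[ Q_Y(u)\leq r \iff u\leq F_Y(r), \qquad u\in(0,1),\ r\geq 0. \]
The backward direction is the definition of the infimum. For the forward direction, right-continuity of $F_Y$ shows that the infimum is attained, so $F_Y(Q_Y(u))\geq u$; then $r\geq Q_Y(u)$ gives $F_Y(r)\geq F_Y(Q_Y(u))\geq u$.

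For left-continuity at $u\in(0,1)$, let $u_m\uparrow u$. Monotonicity gives $L:=\lim Q_Y(u_m)\leq Q_Y(u)$. For the reverse inequality, $Q_Y(u_m)\leq L$ yields $u_m\leq F_Y(L)$ for every $m$ by the equivalence. Letting $m\to\infty$ gives $u\leq F_Y(L)$, hence $Q_Y(u)\leq L$.

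\emph{Property (4).} The limit exists in $[0,\infty]$ because $Q_Y$ is monotone. Call it $L$; by monotonicity $L\leq Q_Y(1)$.

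If $L=\infty$, equality is automatic. If $L<\infty$, the same argument as above applies: $u\leq F_Y(L)$ for all $u<1$, so $F_Y(L)=1$ and $Q_Y(1)\leq L$.

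This is the one place requiring care, since $Q_Y(1)$ may equal $\infty$. The point is to make sure the equivalence is only ever used with a finite $r$, which is why the case $L=\infty$ is handled separately.

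\emph{Property (5).} The value $\xi_0=0$ has probability zero, so we may take $\xi_0\in(0,1)$ almost surely. By (2), $Q_Y(\xi_0)$ is then finite.

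For each $r\geq 0$, the equivalence gives
\[ \Prob(Q_Y(\xi_0)\leq r)=\Prob(\xi_0\leq F_Y(r))=F_Y(r). \]
The two random variables therefore have the same CDF on $[0,\infty)$, and hence $Y\eqd Q_Y(\xi_0)$.

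I expect no step to be a genuine obstacle. The only delicate point is establishing the equivalence carefully, namely proving that the infimum is attained using right-continuity of $F_Y$; everything else follows from it.
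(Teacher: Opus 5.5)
Your proof is correct and complete. The Galois equivalence $Q_Y(u)\le r \iff u\le F_Y(r)$ (for $u\in(0,1)$ and finite $r\ge 0$), obtained from right-continuity of $F_Y$, is the standard tool, and you correctly keep $r$ finite when handling $u\uparrow 1$. The paper itself gives no argument; it defers to the definition and to Propositions~1 and~2 of the cited note on generalized inverses, which rest on this same equivalence, so your write-up is essentially a self-contained version of the route the paper cites.
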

\begin{proof}
    Each property either follows immediately from definition \eqref{def:quantile} or is established in Proposition 1 and Proposition 2 from \cite{MR3072795}; we omit the details. 
\end{proof}

We will often work with quantile functions for the radial distributions of rotationally invariant probability measures on $\C$.  
As in \eqref{def:qt}, we will often only consider these quantile functions on $(0, 1)$ as the values at $0$ and $1$ can be deduced from Proposition \ref{prop:quantile}.

\subsection{Assumptions and preliminary definitions}
We work under the assumptions of Theorem \ref{thm:main}. 
Recall that $\mu_0$ is a measure in the complex plane that satisfies \eqref{eq:lm}, $X_1, X_2, \ldots$ are independent random variables with common distribution $\mu_0$, and $p_n$ is given in \eqref{def:pn}.
We realize each random variable $X_j$ as $X_j = R_j U_j$, where $R_j \geq 0$ is the radial part and $U_j$ is the angular part; 
$R_1, U_1, R_2, U_2, \ldots$ are jointly independent, and for each $j$, $R_j$ has CDF $F_0$ (recall that $F_0$ is defined in \eqref{def:F0}), and $U_j$ is uniformly distributed on the unit circle centered at the origin in the complex plane.
Assumption \eqref{eq:lm} implies that
\begin{equation} \label{eq:l+m}
   \int_{\C} \log_+ |z| \d\mu_0(z) = \E[\log_+ |X_1|] = \E[\log_+ R_1] < \infty,  
\end{equation}
and it will often be more convenient to work with \eqref{eq:l+m} in the proof.

Recall that $(k_n)$ is a sequence satisfying \eqref{eq:knlim}, and $t \in (0, 1)$ will be fixed. 
Throughout the proof, we will assume $n$ is sufficiently large so that $1 \leq k_n \leq n-1$. 
Set $m_n = n - k_n$.

We will let $W^{(k_n)}_{n, 1}, \ldots, W^{(k_n)}_{n, m_n}$ be the roots of $p_n^{(k_n)}$.
It will often be convenient to normalize $p_n^{(k_n)}$ so that it is monic; 
to this end, we define the monic polynomial 
\[ \monicp = \frac{m_n!}{n!} p_{n}^{(k_n)}. \]
Clearly, $\monicp$ has the same roots as $p_n^{(k_n)}$.


\subsection{Logarithmic potential}
We collect the necessary definitions and results from potential theory that we will need in the proof; we refer the reader to \cite{MR4807484, MR1334766} for further details and proofs.

For a finite Borel measure $\nu$ in the complex plane with 
\[ \int_{\C} \log_+|z| \d \nu(z) < \infty, \]
we define its logarithmic potential $L_\nu: \C \to [-\infty, \infty)$ as 
\[ L_\nu(z) = \int_{\C} \log |z - w| \d \nu(w). \]
Here, we interpret $\log 0 = -\infty$.
It follows that 
\begin{equation} \label{eq:log-pot-recovery}
    \Delta L_\nu = 2 \pi \nu
\end{equation}
in the distributional sense, where $\Delta$ is the Laplace operator. 
When $\nu$ is rotationally invariant, we readily see that $L_\nu(z) = L_\nu(|z|)$ for any $z \in \C$.

For $r > 0$ and $z \in \C$, Jensen's formula (see, for instance, \cite{MR4506522}) implies 
\begin{equation} \label{eq:jensen}
    \frac{1}{2\pi}\int_0^{2\pi} \log |z - re^{i \theta}| \d\theta = \log \max\{|z|, r\};
\end{equation}
we will use this identity often to compute the logarithmic potentials that arise in this work.  

$L_n$ will denote the logarithmic potential of $\mu_n$: 
\[ L_n(z) = \frac{1}{n} \sum_{j=1}^n \log |z - X_j|, \quad z \in \mathbb{C}. \]
Similarly, we use $L_n^{(k_n)}$ for the logarithmic potential of $\mu_n^{(k_n)}$:
\[ L_n^{(k_n)}(z) = \frac{1}{m_n}\log |\monicp(z)| =\frac{1}{m_n} \sum_{j=1}^{m_n} \log |z - W^{(k_n)}_{n, j}|, \quad z \in \C. \]

We let $L_0$ be the logarithmic potential of $\mu_0$.
Since $L_0(z) = L_0(|z|)$ by rotational invariance, we will focus on $L_0(x)$ when $x$ is real and positive.  

\begin{proposition}[Properties of $L_0$] \label{prop:L0}
$L_0$ satisfies the following properties:
\begin{enumerate}
    \item For every $x > 0$,
    \[ L_0(x) = \E [ \log \max\{x, R_1\}] = \int_0^1 \log \max \{x, Q_0(u)\} \,du. \]
    \item $L_0(x)$ is finite for every $x > 0$.
    \item $L_0$ is non-decreasing and continuous on $(0, \infty)$.
\end{enumerate}
\end{proposition}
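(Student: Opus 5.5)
The plan is to obtain (1) directly from Fubini and Jensen's formula \eqref{eq:jensen}, and then to derive (2) and (3) from the resulting one-dimensional representation.

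For (1), fix $x>0$ and write $X_1 = R_1U_1$ with $R_1$, $U_1$ independent and $U_1$ uniform on the unit circle. The issue is justifying Fubini for $\log|x - R_1U_1|$, which is not bounded below. We first handle the positive part. Since
\[ \log_+|x-w| \le \log_+(|x|+|w|) \le \log 2 + \log_+|x| + \log_+|w|, \]
\eqref{eq:l+m} gives $\E[\log_+|x-X_1|]<\infty$. The negative part is nonnegative, so Tonelli applies to it without any integrability assumption. Hence $L_0(x) = \E[\log|x-X_1|]$ is a well-defined element of $[-\infty,\infty)$, and we may condition on $R_1$:
\[ L_0(x)=\E\Big[\tfrac{1}{2\pi}\int_0^{2\pi}\log|x-R_1e^{i\theta}|\,d\theta\Big]. \]
By \eqref{eq:jensen} the inner integral equals $\log\max\{x,R_1\}$; this also holds when $R_1=0$. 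This gives the first formula. The second formula follows from Proposition \ref{prop:quantile}(5), since $R_1\eqd Q_0(\xi_0)$, and $u\mapsto \log\max\{x,Q_0(u)\}$ is measurable because it is monotone.

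For (2), observe that $\log\max\{x,R_1\}\ge \log x>-\infty$. On the other side, $\log\max\{x,R_1\}\le \log_+ x+\log_+R_1$, which is integrable by \eqref{eq:l+m}. So $L_0(x)\in[\log x,\ \log_+x+\E\log_+R_1]$ is finite.

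For (3), monotonicity is immediate because $x\mapsto\log\max\{x,R_1\}$ is non-decreasing for each sample. For continuity at $x_0>0$, restrict to $x\in[x_0/2,2x_0]$. There the integrand is continuous in $x$ for each fixed $R_1$ and is dominated by $|\log(x_0/2)|+\log_+(2x_0)+\log_+R_1$, which is integrable. Dominated convergence then gives continuity.

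The only mildly delicate point is the Fubini/Tonelli justification in (1), because the integrand has a logarithmic singularity. The split into positive and negative parts handles it, and the negative part turns out to be harmless after angular averaging, since Jensen's formula yields a quantity bounded below by $\log x$.
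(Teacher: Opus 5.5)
Your proof is correct and follows the paper's own argument. Like the paper, you average over $U_1$ with Jensen's formula \eqref{eq:jensen} to get $\E[\log\max\{x,R_1\}]$, sandwich this between $\log x$ and $\log_+ x + \log_+ R_1$ for finiteness, and pass to the quantile form via $R_1 \eqd Q_0(\xi_0)$; you also supply the positive/negative-part Tonelli justification and the dominated-convergence bound for continuity, which the paper leaves implicit.
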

\begin{remark}
    Observe that $L_0$ may not be finite at the origin.  
    For example, if $\mu_0$ has an atom at $0$, then $L_0(0) = -\infty$.
\end{remark}
\begin{proof}[Proof of Proposition \ref{prop:L0}]
By averaging over $U_1$ first and then $R_1$, we have
\begin{equation} \label{eq:L0rep}
    L_0(x) = \E[ \log |x - U_1R_1|] = \frac{1}{2\pi}\E \int_0^{2\pi}\log |x - R_1 e^{i\theta}|  \d \theta = \E [ \log \max\{x, R_1\}],
\end{equation}
where we used \eqref{eq:jensen}.
Since 
\[ \log x \leq \log \max\{x, R_1\} \leq \log_+ x + \log_+ R_1, \]
it follows from \eqref{eq:l+m} that $L_0(x)$ is finite for all $x > 0$. 
In addition, it is easy to see that the expression on the right-hand side of  \eqref{eq:L0rep} is continuous and non-decreasing on $(0, \infty)$. 

If $\xi_0$ is a random variable uniformly distributed on $(0, 1)$, it follows from Proposition \ref{prop:quantile} that $Q_0(\xi_0)$ has the same distribution as $R_1$:
\begin{equation} \label{eq:eq-dist}
    Q_0(\xi_0) \eqd R_1.
\end{equation}
Thus, continuing from \eqref{eq:L0rep}, we conclude that
\[ L_0(x) = \E [ \log \max \{x, Q_0(\xi_0)\}] = \int_0^1 \log \max \{x, Q_0(u)\} \d u \]
for $x > 0$.
\end{proof}

To end this subsection, we note the following deterministic bound.
Recall that $\lambda$ is the Lebesgue measure on $\C$. 
\begin{proposition} \label{prop:l2}
    For any compact set $K \subset \C$, there exists a constant $C > 0$ (depending only on $K$) so that 
    \begin{equation} \label{eq:l2}
        \left( \int_K \log^2 |z - w| \d \lambda(z) \right)^{1/2} \leq C (1 + \log_+|w|) 
    \end{equation}
    for any $w \in \C$. 
\end{proposition}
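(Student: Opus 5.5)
We split the integral according to whether $w$ is far from $K$ or not. Since $K$ is compact, fix $M \geq 1$ with $K \subset \{ |z| \leq M\}$. The argument has two cases, $|w| \geq 2M$ and $|w| < 2M$.

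\emph{Case $|w| \geq 2M$.} For $z \in K$ we have $|w|/2 \leq |z-w| \leq 2|w|$. Since $|w| \geq 2M \geq 2$, this gives $|\log|z-w|| \leq \log_+|w| + \log 2$. Squaring and integrating over $K$ yields
\[ \left(\int_K \log^2|z-w| \d\lambda(z)\right)^{1/2} \leq \lambda(K)^{1/2}\bigl(\log 2 + \log_+|w|\bigr). \]

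\emph{Case $|w| < 2M$.} Here $K \subset \{ z : |z-w| \leq 3M \}$, so after the translation $z \mapsto z - w$,
\[ \int_K \log^2|z-w| \d\lambda(z) \leq \int_{|\zeta| \leq 3M} \log^2|\zeta| \d\lambda(\zeta) = 2\pi \int_0^{3M} \rho \log^2 \rho \d\rho. \]
The right-hand side is a finite constant depending only on $M$, because $\rho\log^2\rho$ is integrable near $0$. The bound is uniform in $w$ in this bounded region, which is the only point where the logarithmic singularity matters.

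Combining the two cases, we take $C$ to be the maximum of $\lambda(K)^{1/2}\max\{\log 2, 1\}$ and $\bigl(2\pi\int_0^{3M}\rho\log^2\rho\,d\rho\bigr)^{1/2}$. This gives \eqref{eq:l2} for all $w \in \C$, since $1 + \log_+|w| \geq 1$.

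There is no real obstacle. The only care needed is to obtain uniformity in $w$ near $K$, and the translation-invariance bound in the second case handles this.
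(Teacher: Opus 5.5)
Your proof is correct and follows essentially the same route as the paper: for bounded $|w|$ you translate $K-w$ into a fixed disk and use local integrability of $\log^2|\cdot|$, and for large $|w|$ you use that $|z-w|$ is comparable to $|w|$ on $K$, giving a bound of order $\lambda(K)^{1/2}(1+\log_+|w|)$. The only differences are cosmetic: your threshold is $2M$ instead of the paper's $r+1$, and you use the lower bound $|z-w|\ge |w|/2$ where the paper uses $|z-w|\ge 1$.
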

\begin{proof}
    Choose $r > 1$ so that $K \subset \{z \in \mathbb{C} : |z| < r\}$. 
    If $|w| \leq r + 1$, then $K-w$ lies in the disk $\{z \in \C: |z| < 2r + 2\}$, and so
    \[ \int_K \log^2 |z-w|\d \lambda(z) \leq \int_{|z| < 2r + 2} \log^2 |z|\d \lambda(z) \leq C_r \]
    for a constant $C_r > 0$ depending only on $r$ (and hence only on $K$) since $\log^2 |\cdot|$ is locally integrable. 
    If $|w| > r + 1$, then $1 \leq |z-w| \leq 2|w|$ for any $z \in K$. 
    It follows that 
    \[ \int_K \log^2 |z-w| \d \lambda(z) \leq 4 \lambda(K)\log_+^2 |w|. \]
    The bound in \eqref{eq:l2} follows by choosing $C$ appropriately in terms of the constants $C_r$ and $4 \lambda(K)$. 
\end{proof}

\subsection{The limiting distribution}
Define $\mu_t$ to be the distribution (in the complex plane) of the random variable given in \eqref{eq:mut}.
In particular, $\mu_t$ is clearly rotationally invariant by construction. 
Set
\begin{equation} \label{def:lt}
    L_t(z) = \frac{1}{1-t}\int_t^1 \log \max \left\{|z|, Q_0(u) \frac{u-t}{u}\right\} \d u 
\end{equation}
for $z \in \C$. 

\begin{proposition}[Properties of $\mu_t$] \label{prop:mut}
    The rotationally invariant measure $\mu_t$ on $\C$ satisfies the following properties:
    \begin{enumerate}
    \item One has
    \begin{equation} \label{eq:mutflog}
        \int_\C \log (1 + |z|) \d \mu_t(z) < \infty,
    \end{equation}
    and $L_t$ is the logarithmic potential of $\mu_t$.
    \item For any compact set $K \subset \C$, there exists a constant $C > 0$ (depending only on $K$) so that
    \begin{equation} \label{eq:mutl2}
        \|L_t\|_{L^2(K)} \leq C \left( 1 + \int_\C \log_+ |w| \d \mu_t(w) \right) < \infty. 
    \end{equation}
    \item The radial CDF $F_t$ (given in \eqref{def:Ft}) for $\mu_t$ has quantile function $Q_t$ (defined in \eqref{def:qt}).
    \end{enumerate}
\end{proposition}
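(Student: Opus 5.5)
We will work throughout with the random variable $Z_t = Q_0(\xi_t)\frac{\xi_t - t}{\xi_t} U$ from \eqref{eq:mut}, whose law is $\mu_t$ by definition. Its radial part is $R_t := Q_0(\xi_t)\frac{\xi_t-t}{\xi_t}$, with $\xi_t$ uniform on $(t,1)$ and independent of $U$. Each of the three properties then reduces to a one-dimensional statement about $R_t$.

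\textbf{Property (1).} First we show integrability. Since $0 \le \frac{\xi_t - t}{\xi_t} \le 1$, we have $\log(1+R_t) \le \log(1+Q_0(\xi_t))$. The density of $\xi_t$ is $\frac{1}{1-t}\mathbf{1}_{(t,1)}$, which is at most $\frac{1}{1-t}$ times the density of $\xi_0$. Hence
\[
\E\log(1+|Z_t|) \le \frac{1}{1-t}\E\log(1+Q_0(\xi_0)) = \frac{1}{1-t}\int_\C \log(1+|z|)\d\mu_0(z) < \infty,
\]
using \eqref{eq:eq-dist} and \eqref{eq:lm}. This gives \eqref{eq:mutflog}.

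Next we identify the potential. Because \eqref{eq:mutflog} holds, $L_{\mu_t}$ is well defined. Conditioning on $\xi_t$ and averaging over $U$ with Jensen's formula \eqref{eq:jensen} gives
\[
L_{\mu_t}(z) = \E\log\max\{|z|, R_t\}.
\]
Writing this expectation as an integral over $u\in(t,1)$ against the density $\frac{1}{1-t}$ yields exactly \eqref{def:lt}. For $z = 0$ the same computation holds with both sides possibly equal to $-\infty$. To justify the conditioning step we use Tonelli. The positive part of $\log|z-w|$ is bounded by $\log_+|z|+\log_+|w|+\log 2$, which is integrable, so Fubini applies to the positive parts and the computation is legitimate in $[-\infty,\infty)$.

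\textbf{Property (2).} By Minkowski's integral inequality,
\[
\|L_t\|_{L^2(K)} \le \int_\C \|\log|\cdot - w|\|_{L^2(K)}\d\mu_t(w).
\]
Proposition~\ref{prop:l2} bounds the integrand by $C(1+\log_+|w|)$, which gives \eqref{eq:mutl2}. Finiteness of the right-hand side follows from \eqref{eq:mutflog}. Here we use joint measurability and Tonelli on $|\log|$ to apply Minkowski.

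\textbf{Property (3).} This is the main step. Define $g(u) = Q_0(u)\frac{u-t}{u}$ on $(t,1)$, and set $u' = t+(1-t)v$, which is an increasing bijection from $(0,1)$ to $(t,1)$. If $\eta$ is uniform on $(0,1)$, then $t+(1-t)\eta$ is uniform on $(t,1)$, so
\[
R_t \eqd Q_t(\eta), \qquad Q_t(v) = g(t+(1-t)v),
\]
which matches \eqref{def:qt}. The key observation is that $Q_t$ is non-decreasing and left-continuous on $(0,1)$. It is a product of two nonnegative, non-decreasing, left-continuous functions: $Q_0(t+(1-t)v)$, by Proposition~\ref{prop:quantile}, and the continuous increasing function $\frac{(1-t)v}{t+(1-t)v}$. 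Moreover $Q_t \ge 0$.

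It remains to use the standard fact that a non-decreasing, left-continuous $h\ge 0$ on $(0,1)$ with $h(\eta) \eqd Y$ must equal $Q_Y$ on $(0,1)$. The argument is through the equivalence $h(v)\le r \iff v \le F_Y(r)$. For the forward direction, if $v$ satisfies $h(v)\le r$, then by monotonicity $\{w : h(w)\le r\}\supseteq(0,v]$, so $F_Y(r) = \lambda\{w : h(w) \le r\} \ge v$. For the reverse direction, left-continuity makes $\{w : h(w)\le r\}$ a closed-on-the-right interval $(0,a]$ with $a = F_Y(r)$. Given this equivalence, $Q_Y(v) = \inf\{r : F_Y(r)\ge v\} = \inf\{r : h(v)\le r\} = h(v)$. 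This is the step that needs the most care, particularly the handling of the interval endpoint and the cases $a\in\{0,1\}$. Once it is done, $Q_t$ is the quantile function of $F_t$.
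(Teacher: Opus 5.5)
Your proposal is correct and follows essentially the same route as the paper. You use the same steps: the domination $\frac{u-t}{u}\le 1$ for integrability, Jensen's formula for the potential, and Minkowski plus Proposition \ref{prop:l2} for the $L^2$ bound. For part (3) you reduce to $f_t(\xi_t)\eqd Q_t(\xi_0)$ and identify a non-decreasing left-continuous function of a uniform variable with its quantile function. That is the paper's Proposition \ref{prop:inverse}, which you reprove via the equivalence $h(v)\le r \iff v\le F_Y(r)$ rather than two one-sided inequalities, and you explicitly check that $Q_t$ is non-decreasing and left-continuous, which the paper leaves implicit.
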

\begin{proof}
    Recall that $\mu_t$ is the distribution of the random variable in \eqref{eq:mut}, where $\xi_t$ is uniform on $(t, 1)$ and $U$ is independent and uniformly distributed on the unit circle in the complex plane.  
    Let $\xi_0$ be a random variable uniformly distributed on $(0, 1)$. 
    Since $Q_0(u) \frac{u -t}{u} \leq Q_0(u)$ for $t < u < 1$, we have
    \begin{align*}
        \int_\C \log (1 + |z|)\d \mu_t &= \frac{1}{1-t} \int_t^1 \log \left( 1 + Q_0(u) \frac{u - t}{u}\right) \d u \\
        &\leq \frac{1}{1-t}\int_0^1 \log \left( 1 + Q_0(u) \right) \d u \\
        &= \frac{1}{1-t}\E [ \log (1 + Q_0(\xi_0))] \\
        &= \frac{1}{1-t}\E [ \log (1 + R_1)] \\
        &= \frac{1}{1-t}\int_\C \log (1 + |z|) \d \mu_0(z) \\
        &< \infty,
    \end{align*}
    where we used \eqref{eq:eq-dist} and the last integral is finite by assumption \eqref{eq:lm}.
    This establishes \eqref{eq:mutflog}. 
    For $z \in \C$ with $z \neq 0$, we have 
    \begin{align*}
        L_t(z) &= \E \left[ \log \max \left\{|z|, Q_0(\xi_t) \frac{\xi_t - t}{\xi_t} \right\}  \right] \\
        &= \frac{1}{2\pi} \E \left[ \int_0^{2\pi} \log \left||z| - Q_0(\xi_t) \frac{\xi_t -t}{\xi_t} e^{i \theta}\right| \d \theta \right] \\
        &= \E \left[ \log\left| |z| - Q_0(\xi_t) \frac{\xi_t - t}{\xi_t} U \right| \right] \\
        &= L_{\mu_t}(|z|)
    \end{align*}
    by \eqref{eq:jensen}. 
    Since $\mu_t$ is rotationally invariant, we conclude that $L_t$ is the logarithmic potential of $\mu_t$. 

    To establish the $L^2$-norm bound in \eqref{eq:mutl2}, we will apply Proposition \ref{prop:l2}. 
    Let $K \subset \C$ be compact. 
    Minkowski's integral inequality (see Theorem 202 in \cite{MR944909}) gives
    \begin{align*}
        \|L_t\|_{L^2(K)} &= \left( \int_K \left( \int_\C \log |z - w | \d \mu_t(w) \right)^2 \d \lambda(z) \right)^{1/2} \\
        &\leq \int_\C \left( \int_K \log^2 |z -w | \d \lambda(z) \right)^{1/2} d\mu_t(w).
    \end{align*}
    By bounding the inner integral on the right-hand side using \eqref{eq:l2} from Proposition \ref{prop:l2}, we obtain precisely the bound in \eqref{eq:mutl2} (whose value is finite by \eqref{eq:mutflog}). 
    
    To see that $Q_t$ is the quantile function corresponding to the radial CDF $F_t$, we define
    \begin{equation} \label{def:ft}
        f_t(u) = Q_0(u) \frac{u-t}{u}, \quad t < u < 1.         
    \end{equation}
    By definition, $F_t$ is the CDF of $f_t(\xi_t)$, and 
    \[ \xi_t \eqd t + (1-t) \xi_0. \]
    This means $F_t$ is the CDF of 
    \[ f_t(\xi_t) \eqd f_t(t + (1-t)\xi_0) = Q_t(\xi_0). \]
    The conclusion now follows from Proposition \ref{prop:inverse} below.
\end{proof}

\begin{proposition} \label{prop:inverse}
    Let $\xi_0$ be a random variable uniformly distributed on $(0, 1)$, and let $g: (0, 1) \to [0, \infty)$ be non-decreasing and left-continuous.
    If $Y = g(\xi_0)$ has CDF $F_Y$ and quantile function 
    \[ Q_Y(u) = \inf \{ r \geq 0 : F_Y(r) \geq u \}, \quad 0 < u < 1, \]
    then $Q_Y(u) = g(u)$ for all $0 < u < 1$.
\end{proposition}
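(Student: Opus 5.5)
The plan is to compare $g$ and $Q_Y$ pointwise through the standard Galois-type equivalence
\[ g(u) \leq r \iff F_Y(r) \geq u, \qquad 0<u<1,\ r \geq 0. \]
Once this equivalence holds, the definition of $Q_Y(u)$ as an infimum gives $Q_Y(u) = \inf\{r \geq 0 : g(u) \leq r\} = g(u)$. This uses that $g(u) \in [0,\infty)$, so the set $\{r \ge 0 : r \geq g(u)\}$ is nonempty and its infimum is attained at $g(u)$.

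The forward direction is routine. Suppose $g(u) \leq r$. Monotonicity of $g$ gives $\{v \in (0,1) : v \leq u\} \subseteq \{v : g(v) \leq r\}$. Therefore
\[ F_Y(r) = \Prob(g(\xi_0) \leq r) \geq \Prob(\xi_0 \leq u) = u. \]

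The reverse direction is where left-continuity enters, and I expect it to be the only real obstacle. Set $A_r = \{v \in (0,1) : g(v) \leq r\}$. Since $g$ is non-decreasing, $A_r$ is an initial segment of $(0,1)$: it is empty or an interval with left endpoint $0$ and right endpoint $s = \sup A_r$. Left-continuity shows that $s \in A_r$ when $0 < s < 1$: choose $v_j \uparrow s$ with $v_j \in A_r$, so $g(s) = \lim g(v_j) \leq r$. Hence $A_r = (0,s]$, or $A_r = (0,1)$ when $s=1$. In every case $F_Y(r) = \Prob(\xi_0 \in A_r) = s$.

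Now suppose $F_Y(r) \geq u$, so $s \geq u$. If $s<1$, then $u \in (0,s] = A_r$. If $s=1$, then $A_r=(0,1) \ni u$. Either way $g(u) \leq r$. The degenerate case $A_r = \emptyset$ gives $F_Y(r) = 0 < u$, so it never arises. This completes the equivalence, and hence $Q_Y = g$ on $(0,1)$.

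In the application to Proposition \ref{prop:mut}, one still has to check that $g = Q_t$ is non-decreasing and left-continuous. It is the product of $Q_0(t+(1-t)u)$, which is non-decreasing, left-continuous and nonnegative by Proposition \ref{prop:quantile}, and the continuous increasing nonnegative factor $(1-t)u/(t+(1-t)u)$. A product of two nonnegative non-decreasing left-continuous functions is again nonnegative, non-decreasing and left-continuous, so $Q_t$ qualifies.
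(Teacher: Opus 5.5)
Your proof is correct and follows the paper's two-inequality structure. The forward direction $F_Y(g(u)) \geq u$ is identical to the paper's. Your reverse direction is the contrapositive of the paper's step $r < g(u) \Rightarrow F_Y(r) < u$. The only difference is where left-continuity is applied: you use it at the right endpoint $s$ of $\{g \leq r\}$ to get $F_Y(r) = s$ exactly, whereas the paper uses it at $u$ to find $v < u$ with $g(v) > r$. Your closing check that $Q_t$ is non-decreasing and left-continuous is also correct, and it supplies a hypothesis the paper leaves implicit when it invokes this proposition in Proposition \ref{prop:mut}.
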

\begin{proof}
    Note that $Q_Y(u)$ can be extended to $u = 0$ and $u=1$ by Proposition \ref{prop:quantile}.
    Fix $0 < u < 1$.
    Since $g$ is non-decreasing, we find
    \[ F_Y(g(u)) = \Prob(g(\xi_0) \leq g(u)) \geq \Prob(\xi_0 \leq u) = u, \]
    and so $Q_Y(u) \leq g(u)$.
    For the reverse inequality, take $r < g(u)$.
    By left-continuity, there exists $v < u$ so that $g(v) > r$.
    Since $g$ is non-decreasing, this means $\{0 < s < 1 : g(s) \leq r\} \subset (0, v)$, and hence
    \[ F_Y(r) = \Prob ( g(\xi_0) \leq r) \leq v < u. \]
    In other words, $Q_Y(u) \geq r$.  
    Since $r < g(u)$ was arbitrary, we conclude that $Q_Y(u) \geq g(u)$. 
\end{proof}

\section{Proofs of Theorems \ref{thm:main} and \ref{thm:description}} \label{sec:proof}

This section is devoted to the proofs of Theorems \ref{thm:main} and \ref{thm:description}.
In fact, we focus only on Theorem \ref{thm:main} as we can already handle the proof of Theorem \ref{thm:description}. 
\begin{proof}[Proof of Theorem \ref{thm:description}]
    Recall that $\mu_t$ is defined as the distribution of the complex-valued random variable in \eqref{eq:mut}. 
    The distribution is clearly rotationally invariant.  
    In addition, the description of its radial quantile function in \eqref{def:qt} follows from Proposition \ref{prop:mut}. 
\end{proof}

\subsection{Proof of Theorem \ref{thm:main}}

It remains to establish Theorem \ref{thm:main}.  
To do so, we will establish convergence of the logarithmic potential $L_n^{(k_n)}$ to the limiting logarithmic potential $L_t$. 

\begin{lemma}[Pointwise convergence of the logarithmic potential] \label{lem:pointwise}
    For every $z \in \C$ with $z \neq 0$, 
    \[ L_n^{(k_n)}(z) \longrightarrow L_t(z) \]
    in probability as $n \to \infty$.
\end{lemma}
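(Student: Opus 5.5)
The plan is to work with the monic normalization $\monicp$ and to condition on the radii $R_1, R_2, \ldots$, exploiting the independent uniform phases $U_j$.

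\textbf{Step 1: coefficient expansion.} Write $p_n(z) = \sum_{l=0}^n (-1)^l e_l(X) z^{n-l}$, where $e_l$ is the $l$-th elementary symmetric polynomial of $X_1, \ldots, X_n$. Differentiating $k_n$ times and normalizing gives
\[
\monicp(z) = \sum_{l=0}^{m_n} (-1)^l w_l\, e_l(X)\, z^{m_n - l}, \qquad w_l = \binom{n-l}{k_n}\Big/\binom{n}{k_n}.
\]

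\textbf{Step 2: exact conditional second moment.} Each monomial $\prod_{j \in T} U_j$ in the $U$'s is orthogonal to every other one under the uniform phase distribution. Hence
\[
\E\bigl[|\monicp(z)|^2 \,\big|\, R\bigr] = \sum_{l=0}^{m_n} w_l^2\, |z|^{2(m_n-l)}\, e_l(R_1^2, \ldots, R_n^2).
\]
Define $M_n(z) = \frac{1}{2m_n}\log$ of the right-hand side.

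\textbf{Step 3: upper bound.} Markov's inequality gives $L_n^{(k_n)}(z) \le M_n(z) + o(1)$ with probability tending to one. Indeed, $P(|\monicp|^2 > e^{\eps n}\,\E[|\monicp|^2 \mid R]) \le e^{-\eps n}$, which is even summable, so the bound holds almost surely.

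\textbf{Step 4: deterministic limit of $M_n(z)$.} Put $l = sn$ with $s \in [0, 1-t]$.
\begin{itemize}
\item Stirling's formula gives $\frac1n \log w_{sn} \to \phi_t(s)$, which is explicit in terms of binary entropies.
\item The standard law of large numbers for elementary symmetric polynomials of i.i.d.\ nonnegative variables gives
\[
\frac1n \log e_{sn}(R^2) \to \inf_{y>0}\Bigl\{\int_0^1 \log(1 + yQ_0(u)^2)\,du - s\log y\Bigr\}.
\]
I would prove it via the generating function $\prod_j (1 + yR_j^2)$, the strong LLN under \eqref{eq:l+m}, and a saddle-point/Chernoff argument. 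The condition \eqref{eq:l+m} is exactly what makes $\log(1 + yR^2)$ integrable.
\item Since there are only $m_n + 1$ terms, Laplace's principle gives $M_n(z) \to \frac{1}{2(1-t)} \sup_s \{\cdots\}$ almost surely.
\end{itemize}

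\textbf{Step 5: lower bound.} I would use the one-sided concentration inequality that the overview refers to as Lemma \ref{lem:lower-bound}. Conditionally on $R$, the quantity $\log|\monicp(z)|$ is a function of the independent phases that is plurisubharmonic in each coordinate. The goal is an anti-concentration bound: with probability tending to one, $|\monicp(z)|^2$ is at least $e^{-\eps n}$ times its conditional second moment. This is in the spirit of small-ball estimates for log-moduli of polynomials in independent unimodular variables. Concretely, I would first try to isolate the dominant index $l^*$ and show that the top $l^*$ radii produce a term that is not cancelled. Combined with Steps 3 and 4, this gives $L_n^{(k_n)}(z) \to \lim M_n(z)$ in probability.

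\textbf{Step 6: identify the limit with $L_t$.} It remains to show that the variational expression $\frac{1}{2(1-t)}\sup_s \inf_y\{\cdots\}$ equals $L_t(z)$ from \eqref{def:lt}. I would first try this with a minimax exchange and the substitution $u \mapsto Q_0(u)$. The optimizing $y$ should correspond to a radius threshold, and the optimality conditions in $s$ should produce the shrink factor $(u-t)/u$ through the derivative of $\phi_t$. One can check the identity first when $Q_0$ is constant (the uniform circle), where both sides are explicit, and then pass to general $Q_0$ by monotone approximation using Proposition \ref{prop:L0}.

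\textbf{Main obstacles.} The lower bound in Step 5 is where the argument is weakest, since it only yields convergence in probability. The identity in Step 6 is the main computational obstacle and is the real content of the statement.
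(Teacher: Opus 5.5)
\textbf{There is a genuine gap.} Steps 3--6 aim at the wrong quantity. $M_n(z)=\frac{1}{2m_n}\log\E[|\monicp(z)|^2\mid R]$ is an annealed average over the phases, and it is exponentially larger than the typical size of $|\monicp(z)|$. So $\lim M_n(z)\neq L_t(z)$, the anti-concentration claim of Step 5 is false, and the identity in Step 6 is false.

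The effect is already visible without derivatives. One has $\frac{1}{2n}\log\E[|p_n(z)|^2\mid R]\to\frac12\E\log(|z|^2+R_1^2)$, which is strictly larger than $\E\log\max\{|z|,R_1\}=L_0(z)$.

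Your own proposed sanity check exposes it for $t\in(0,1)$. Take $\mu_0$ uniform on the unit circle, so $e_l(R^2)=\binom nl$. For $|z|=r>1$ we have $L_t(z)=\log r$, since $(u-t)/u<1$. Your limit is $\frac{1}{2(1-t)}\sup_{s}\{2\phi_t(s)+2(1-t-s)\log r+H(s)\}$, where $H$ is the binary entropy. At $s=0$ this equals $\log r$. However, $\phi_t'(0)=\log(1-t)$ is finite while $H'(0^+)=+\infty$, so the supremum is strictly larger than $\log r$.

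The excess is exactly the entropy of choosing which $l$ phases appear. The $\binom nl$ terms of $e_l(R^2)$ do not add coherently inside $\log|\cdot|$. Since the true limit is $L_t(z)$, one in fact has $|\monicp(z)|^2\le e^{-cn}\E[|\monicp(z)|^2\mid R]$ with probability tending to one. Hence Step 5 cannot be proved, and Markov's inequality on the second moment can never give a sharp upper bound.

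\textbf{The missing idea.} For a multi-affine $f(U)=\sum_S a_SU_S$ in independent uniform phases, the typical size is governed by the largest single coefficient, not by $(\sum_S|a_S|^2)^{1/2}$.
\begin{itemize}
\item Conditioning on all but one phase and using $\E\log|a+bU|=\log\max\{|a|,|b|\}$ inductively gives $\E\log|f(U)|\ge\log\max_S|a_S|$.
\item The Efron--Stein inequality gives $\Var\log|f(U)|=O(N)$, so Chebyshev yields a lower bound in probability.
\end{itemize}
Your instinct in Step 5 to use the top radii is right. The coefficient of $U_S$ in $\monicp(z)$ has modulus $w_{|S|}|z|^{m_n-|S|}R_S$. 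Taking $S$ to be the indices of the $\approx(1-\beta)n$ largest radii, its rate is $B_t(\beta,|z|)$ up to normalization, with $\int_\beta^1\log Q_0(u)\,du$ coming from an order-statistics law of large numbers and no entropy term. But this lower bound does not meet your $M_n$.

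The matching upper bound must also be quenched. The paper proceeds as follows.
\begin{itemize}
\item Cauchy's estimate $|q_n(r)|\le\rho^{-k_n}\max_{|w-r|=\rho}|p_n(w)|$, combined with the almost sure bound $\limsup_n\sup_K\frac1n\log|p_n|\le\sup_KL_0$, gives $v_t(r)=\inf_{\rho>0}\{L_0(r+\rho)-t\log\rho\}$.
\item A one-sided-derivative argument, using $L_0'(\rho\pm)=F_0(\rho\pm)/\rho$ at the minimizer, shows $v_t(r)=\sup_\beta B_t(\beta,r)$.
\item This rearranges to $(1-t)L_t(r)-t\log t-(1-t)\log(1-t)$.
\end{itemize}
The identification in your Step 6 would therefore have to be carried out for this max-coefficient functional, not for the second-moment one.
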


We prove Lemma \ref{lem:pointwise} in Section \ref{sec:pointwise}. 
Assuming Lemma \ref{lem:pointwise}, we now complete the proof of Theorem \ref{thm:main}. 
We need a few helpful lemmata first.

\begin{lemma} \label{lemma:mahler}
    The roots $W^{(k_n)}_{n, 1}, \ldots, W^{(k_n)}_{n, m_n}$ of $p_n^{(k_n)}$ satisfy the following deterministic bound: 
    \begin{equation} \label{eq:mahler}
        \sum_{j=1}^{m_n} \log_+ |W^{(k_n)}_{n,j}| \leq \sum_{j=1}^n \log_+|X_j|. 
    \end{equation}
\end{lemma}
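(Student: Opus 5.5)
We will prove \eqref{eq:mahler} through the Mahler measure. For a polynomial $q(z)=a\prod_{j}(z-\alpha_j)$, define
\[ M(q) = |a| \prod_j \max\{1,|\alpha_j|\}. \]
By Jensen's formula \eqref{eq:jensen},
\[ \log M(q) = \frac{1}{2\pi}\int_0^{2\pi}\log|q(e^{i\theta})|\,d\theta . \]
For a monic $q$ this gives
\[ \log M(q)=\sum_j \log_+|\alpha_j| . \]
So the claim is equivalent to $M(\monicp)\le M(p_n)$, and it suffices to prove the single-step inequality
\[ M\!\left(\tfrac1{d} q'\right)\le M(q) \]
for every monic $q$ of degree $d\ge 1$. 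Note that $\frac1d q'$ is again monic. Iterating this $k_n$ times, once for each derivative with $d=n,n-1,\dots,m_n+1$, and using that the successive normalization constants multiply to $\frac{m_n!}{n!}$, gives \eqref{eq:mahler}.

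The single-step inequality follows from a Gauss--Lucas type argument. Write $q(z)=\prod_{j=1}^d (z-\alpha_j)$. Then
\[ \frac{1}{d}q'(z)=\frac1d\sum_{l=1}^d \prod_{j\neq l}(z-\alpha_j), \]
so $\frac1d q'$ is an average of the monic polynomials $q_l=\prod_{j\ne l}(z-\alpha_j)$. Each $q_l$ satisfies
\[ M(q_l)=\prod_{j\ne l}\max\{1,|\alpha_j|\}\le M(q). \]
However, Mahler measure is not subadditive in general, so we cannot simply average these bounds.

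To get around this, we use the following alternative route, which we expect to work cleanly. Mahler measure is bounded by the $\ell^1$ norm of the coefficients, but that bound loses too much. A better tool is the classical inequality $M(q')\le d\,M(q)$ (Mahler, 1961), whose standard proof is the main obstacle. The route we would try first is to argue via the roots, using the Gauss--Lucas theorem together with a majorization statement of de Bruijn--Springer type: for any convex function $\phi$ on $\C$,
\[ \frac1{d-1}\sum_{w:\,q'(w)=0}\phi(w)\le \frac1d\sum_{j}\phi(\alpha_j). \]
This inequality follows because the critical points are obtained from the roots by a doubly stochastic-type averaging. Concretely, the matrix relating critical points to roots is the one in the de Bruijn--Springer/Malamud result that the vector of critical points is the image of the roots under a doubly stochastic matrix.

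Take $\phi(w)=\log_+|w|=\max\{0,\log|w|\}$, which is convex on $\C$ as a function of $w$ since $\log|w|$ is subharmonic. In fact we need genuine convexity here. Convexity of $\log_+|w|$ as a function on $\R^2$ holds: $\log_+|w|$ is the maximum of $0$ and a radial function, and $\log r$ is not convex in $r$, so we must check this carefully. If it fails, we would instead use $\phi(w)=\log\max\{1,|w|\}$ together with the subharmonic version of majorization. The conclusion would be
\[ \sum_{w}\log_+|w|\le\frac{d-1}{d}\sum_j\log_+|\alpha_j|\le \sum_j\log_+|\alpha_j|, \]
which is the single-step bound. Iterating this bound gives \eqref{eq:mahler}.

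The main obstacle is this single-step inequality. The safest fallback is to cite Mahler's inequality $M(q')\le d\,M(q)$ for degree-$d$ polynomials directly; it is a known, deterministic result.
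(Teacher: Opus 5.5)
With your fallback, the proof is complete and matches the paper's. The reduction is exactly what the paper does: for monic $q$, $\log\mathcal{M}(q)=\sum_j\log_+|\alpha_j|$; $\frac1d q'$ is again monic; the normalizations $\frac1n\cdot\frac1{n-1}\cdots\frac1{m_n+1}$ multiply to $\frac{m_n!}{n!}$; and the single-step bound $\mathcal{M}(\frac1d q')\le\mathcal{M}(q)$ is Mahler's 1961 inequality. The paper simply cites that inequality and iterates; it does not prove it.

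The route you say you would try first cannot work, and you should discard it rather than check it further. The function $\log_+|w|$ is not convex on $\R^2$: along a ray it equals $\log r$ for $r>1$, which is concave. Your proposed replacement $\log\max\{1,|w|\}$ is the same function, so it is not an alternative.

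More decisively, the inequality you aim for is false:
\[
\sum_{w}\log_+|w|\le\frac{d-1}{d}\sum_j\log_+|\alpha_j|.
\]
Take $q(z)=z(z-a)$. Its only critical point is $a/2$, and $\log(|a|/2)>\frac12\log|a|$ as soon as $|a|>4$. The de Bruijn--Springer (Malamud--Pereira) majorization gives exactly this normalized inequality for every convex $\phi$. Since it fails for $\phi=\log_+|\cdot|$, no convex or subharmonic variant of the averaging argument can rescue it.

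Two features separate what you need from what averaging gives. The correct bound $\sum_w\log_+|w|\le\sum_j\log_+|\alpha_j|$ carries no factor $\frac{d-1}{d}$. It is a statement about the geometric mean of $|q|$ on the unit circle: Mahler's inequality is the $p=0$ endpoint of Bernstein's inequality $\|q'\|_{L^p(\T)}\le d\,\|q\|_{L^p(\T)}$. So citing Mahler, as the paper does, is the right way to close the step.
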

\begin{proof}
    For a polynomial 
    \[ q(z) = a_d \prod_{j=1}^d (z - \zeta_j) \]
    with roots $\zeta_1, \ldots, \zeta_d \in \C$ and $a_d \in \C$, define its Mahler measure $\mathcal{M}(q)$ as
    \[ \mathcal{M}(q) = |a_d| \prod_{j=1}^d \max \{1, |\zeta_j|\}. \]
    Mahler's inequality \cite{MR133437} implies that 
    \[ \mathcal{M} \left( \frac{1}{d} q' \right) \leq \mathcal{M}(q) \]
    for any degree $d$ polynomial $q$; we also refer the reader to \cite{MR4031380} for a more modern treatment and generalizations of this bound. 
    Iterating Mahler's inequality, we see that
    \[ \mathcal{M}\left( \monicp \right) \leq \mathcal{M}(p_n) \]
    since both $\monicp$ and $p_n$ are monic. Taking logarithms of both sides of this bound yields \eqref{eq:mahler}. 
\end{proof}

As a consequence of Lemma \ref{lemma:mahler}, we obtain the following (deterministic) $L^2$-norm bound.

\begin{corollary} \label{cor:lnl2}
    For any compact set $K \subset \C$, there exists a constant $C > 0$ (depending only on $K$) so that
    \[ \left\| L_n^{(k_n)} \right\|_{L^2(K)} \leq C \left( 1 + \frac{1}{m_n} \sum_{j=1}^n \log_+ |X_j| \right). \]
\end{corollary}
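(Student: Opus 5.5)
The plan is to mirror the argument used for the bound \eqref{eq:mutl2} in Proposition \ref{prop:mut}: apply Minkowski's inequality to the finite sum defining $L_n^{(k_n)}$, control each term with Proposition \ref{prop:l2}, and then use Lemma \ref{lemma:mahler} to pass from the roots of $p_n^{(k_n)}$ to the roots $X_j$ of $p_n$.

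Fix a compact set $K \subset \C$, and let $C$ be the constant from Proposition \ref{prop:l2} for this $K$. Write
\[ L_n^{(k_n)}(z) = \frac{1}{m_n}\sum_{j=1}^{m_n} \log|z - W^{(k_n)}_{n,j}|. \]
By the triangle inequality for the norm $\|\cdot\|_{L^2(K)}$ (the discrete form of Minkowski's inequality), we get
\[ \left\|L_n^{(k_n)}\right\|_{L^2(K)} \leq \frac{1}{m_n}\sum_{j=1}^{m_n} \left\| \log|\cdot - W^{(k_n)}_{n,j}| \right\|_{L^2(K)}. \]

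Next, apply \eqref{eq:l2} with $w = W^{(k_n)}_{n,j}$ to each summand. This bounds the right-hand side by
\[ \frac{C}{m_n}\sum_{j=1}^{m_n}\left(1 + \log_+|W^{(k_n)}_{n,j}|\right) = C\left(1 + \frac{1}{m_n}\sum_{j=1}^{m_n}\log_+|W^{(k_n)}_{n,j}|\right). \]

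Finally, Lemma \ref{lemma:mahler} gives the deterministic bound
\[ \sum_{j=1}^{m_n}\log_+|W^{(k_n)}_{n,j}| \leq \sum_{j=1}^n \log_+|X_j|. \]
Substituting this into the previous display yields the claimed inequality, with the same constant $C$. That constant depends only on $K$, because it comes directly from Proposition \ref{prop:l2}.

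None of these steps is a real obstacle; each is a direct citation. The only point to check is that every summand $\log|z - W^{(k_n)}_{n,j}|$ lies in $L^2(K)$, so that the triangle inequality is legitimate. This holds because $\log^2|\cdot|$ is locally integrable, and it is exactly the content of Proposition \ref{prop:l2}.
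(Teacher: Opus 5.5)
Your proposal is correct and follows the paper's proof exactly: the triangle inequality in $L^2(K)$, then Proposition \ref{prop:l2} applied to each root $W^{(k_n)}_{n,j}$, and finally Lemma \ref{lemma:mahler} to pass to $\sum_{j=1}^n \log_+|X_j|$.
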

\begin{proof}
    Let $K \subset \C$ be compact, and recall that $\lambda$ is the Lebesgue measure on $\C$.  
    By the triangle inequality for the $L^2$-norm, 
    \begin{align*}
        \|L_n^{(k_n)}\|_{L^2(K)} &= \left( \int_K \left( \frac{1}{m_n}\sum_{j=1}^{m_n} \log |z - W^{(k_n)}_{n,j} | \right)^2 \d \lambda(z) \right)^{1/2} \\
        &\leq \frac{1}{m_n}\sum_{j=1}^{m_n} \left( \int_K \log^2|z - W^{(k_n)}_{n,j}| \d \lambda(z) \right)^{1/2}. 
    \end{align*}
    By Proposition \ref{prop:l2}, there exists a constant $C > 0$ (depending only on $K$) so that 
    \begin{align*}
        \frac{1}{m_n}\sum_{j=1}^{m_n} \left( \int_K \log^2|z - W^{(k_n)}_{n,j}| \d \lambda(z) \right)^{1/2} &\leq \frac{C}{m_n}\sum_{j=1}^{m_n} (1 + \log_+ |W^{(k_n)}_{n,j}|) \\
        &= C \left( 1 + \frac{1}{m_n} \sum_{j=1}^{m_n} \log_+|W^{(k_n)}_{n,j}| \right).
    \end{align*}
    The conclusion now follows from Lemma \ref{lemma:mahler}. 
\end{proof}

We now upgrade the pointwise convergence in Lemma \ref{lem:pointwise} to local $L^1$ convergence. 
Recall that $\lambda$ is the Lebesgue measure on $\C$. 

\begin{lemma} \label{lemma:l1}
    For any compact set $K \subset \C$, 
    \[ \int_K |L_n^{(k_n)}(z) - L_t(z)| \d \lambda(z) \longrightarrow 0 \]
    in probability as $n \to \infty$.
\end{lemma}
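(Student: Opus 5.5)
The plan is to upgrade pointwise convergence in probability (Lemma \ref{lem:pointwise}) to $L^1(K)$ convergence in probability. The tools are uniform integrability, supplied by the $L^2$ bounds in Corollary \ref{cor:lnl2} and Proposition \ref{prop:mut}, together with a Fubini argument. Fix a compact $K$ and write $D_n(z) = |L_n^{(k_n)}(z) - L_t(z)|$.

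The first step is to control the $L^2$ norm. By Corollary \ref{cor:lnl2}, $\|L_n^{(k_n)}\|_{L^2(K)} \le C(1 + \frac{n}{m_n}\cdot \frac1n\sum_{j=1}^n \log_+|X_j|)$. Since $n/m_n \to 1/(1-t)$ and, by the strong law of large numbers and \eqref{eq:l+m}, $\frac1n\sum_j \log_+|X_j| \to \E[\log_+ R_1] < \infty$ almost surely, the quantity $B_n := 1 + \frac{1}{m_n}\sum_j \log_+|X_j|$ is almost surely bounded, hence tight. Combined with \eqref{eq:mutl2}, this gives $\|D_n\|_{L^2(K)} \le C(B_n + 1)$, and the right-hand side is tight.

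The second step truncates. For $M>0$ and any measurable $A_M\subset K$,
\[ \int_K D_n \d\lambda \le \int_K \min\{D_n, M\}\d\lambda + \int_K D_n \mathbf 1_{\{D_n > M\}}\d\lambda, \]
and the second term is at most $\|D_n\|_{L^2(K)}^2/M$ by Chebyshev and Cauchy--Schwarz. Given $\eps,\delta>0$, tightness lets us choose $A$ with $\Prob(\|D_n\|_{L^2(K)}^2 > A) < \delta$ for all large $n$. We then choose $M$ so that $A/M < \eps$.

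The third step handles the bounded part. For each $z \ne 0$, Lemma \ref{lem:pointwise} gives $\E\min\{D_n(z),M\} \to 0$, because convergence in probability of a bounded sequence gives convergence in mean. Since $\{0\}$ is Lebesgue-null, bounded convergence and Fubini's theorem (joint measurability of $(z,\omega)\mapsto D_n$ is clear) give $\E\int_K \min\{D_n,M\}\d\lambda \to 0$. Markov's inequality then shows that $\int_K \min\{D_n,M\}\d\lambda < \eps$ with probability tending to one.

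Combining the three steps, $\Prob(\int_K D_n \d\lambda > 2\eps) \le 2\delta$ for large $n$, which proves the lemma. The main delicate point is the $L^2$ tightness: the Mahler bound yields $\frac{1}{m_n}\sum_{j=1}^n$ rather than an average over the $m_n$ roots, so the convergence $m_n/n \to 1-t > 0$ is essential. The rest is a standard uniform-integrability argument.
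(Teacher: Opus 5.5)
Your proof is correct and follows the paper's argument. Both rest on the same two ingredients. The first is the $L^2(K)$ bound from Corollary \ref{cor:lnl2} and \eqref{eq:mutl2}, which is tight because $n/m_n \to 1/(1-t)$ and the law of large numbers applies. The second is Fubini applied to the pointwise convergence in Lemma \ref{lem:pointwise}. The only cosmetic difference is the split: the paper uses the exceptional set $E_n = \{z \in K : |L_n^{(k_n)}(z) - L_t(z)| > \eps\}$, shows $\E[\lambda(E_n)] \to 0$, and applies Cauchy--Schwarz on $E_n$, whereas you truncate at height $M$ and bound the tail by $\|D_n\|_{L^2(K)}^2/M$.
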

\begin{proof}
    Let $K \subset \C$ be compact. 
    Fix $\eps > 0$, and define the random set 
    \[ E_{n} = \{z \in K : |L_n^{(k_n)}(z) - L_t(z)| > \eps \}. \]
    By Fubini's theorem,  
    \[ \E [\lambda(E_n)] = \int_K \Prob( |L_n^{(k_n)}(z) - L_t(z)| > \eps) \d \lambda(z). \]
    Therefore, by Lemma \ref{lem:pointwise} and the dominated convergence theorem, $\E[\lambda(E_n)] \to 0$ as $n \to \infty$, 
    and hence $\lambda(E_n)$ converges to zero in probability. 

    By the Cauchy-Schwarz and triangle inequalities, 
    \begin{align}
        \int_K |L_n^{(k_n)}(z) - L_t(z)| \d \lambda(z) &\leq \eps \lambda(K) + \int_{E_n} |L_n^{(k_n)}(z) - L_t(z)| \d \lambda(z) \nonumber \\
        &\leq \eps \lambda(K) + \|L_n^{(k_n)} - L_t\|_{L^2(K)} \lambda(E_n)^{1/2} \nonumber \\
        &\leq \eps \lambda(K) + \|L_n^{(k_n)}\|_{L^2(K)} \lambda(E_n)^{1/2} + \|L_t\|_{L^2(K)} \lambda(E_n)^{1/2}. \label{eq:finintbnd}
    \end{align}
    The first term on the right-hand side of \eqref{eq:finintbnd} can be made arbitrarily small by choice of $\eps$.  
    The third term converges to zero in probability since $\lambda(E_n)$ converges to zero in probability and $\|L_t\|_{L^2(K)}$ is a finite deterministic constant by \eqref{eq:mutl2}. 
    For the second term, Corollary \ref{cor:lnl2} implies there exists a constant $C > 0$ (depending only on $K$) so that 
    \begin{align*}
        \lambda(E_n)^{1/2} \|L_n^{(k_n)}\|_{L^2(K)} &\leq C \lambda(E_n)^{1/2} \left( 1 + \frac{n}{m_n} \frac{1}{n} \sum_{j=1}^n \log_+|X_j| \right). 
    \end{align*}
     Notice that $\frac{n}{m_n} \to \frac{1}{1-t}$ as $n \to \infty$ by assumption and 
     \[ \frac{1}{n} \sum_{j=1}^n \log_+|X_j| \longrightarrow \E [ \log_+ |X_1|] < \infty \]
     in probability by the law of large numbers (recall \eqref{eq:l+m}).
     We conclude that $\lambda(E_n)^{1/2} \|L_n^{(k_n)}\|_{L^2(K)}$ converges to zero in probability, and the proof is complete.
\end{proof}

We now have all the tools we need to prove Theorem \ref{thm:main}.

\begin{proof}[Proof of Theorem \ref{thm:main}]
    Let $\varphi: \C \to \C$ be a smooth compactly supported function. 
    We see that
    \begin{align*}
        \int_\C \varphi(z) \d \mu_n^{(k_n)}(z) &= \frac{1}{2\pi} \int_\C L_n^{(k_n)}(z) \Delta \varphi(z) \d \lambda(z) \\
        &\longrightarrow \frac{1}{2\pi}\int_\C L_t(z) \Delta \varphi(z) \d \lambda(z) \\
        &= \int_\C \varphi(z) \d \mu_t(z)
    \end{align*}
    in probability as $n \to \infty$, where the first equality holds by \eqref{eq:log-pot-recovery}, the convergence follows from Lemma \ref{lemma:l1}, and the last equality is a consequence of Proposition \ref{prop:mut} and \eqref{eq:log-pot-recovery}. 
    By a standard approximation argument (approximating any continuous compactly supported function on $\C$ by a smooth compactly supported function), we find that $\mu_n^{(k_n)}$ converges vaguely to $\mu_t$ in probability as $n \to \infty$. 
    Since $\mu_t$ is a deterministic probability measure, we can upgrade this vague convergence to weak in probability convergence.
    The argument is standard, and we refer the reader to Theorem 4.19 in \cite{MR3642325} for details.
\end{proof}

\subsection{Proof of Lemma \ref{lem:pointwise}} \label{sec:pointwise}

It remains to establish Lemma \ref{lem:pointwise}. 
To this end, define the polynomial
\begin{equation} \label{def:qn}
    q_n(z) = \frac{1}{k_n!} p_n^{(k_n)}(z). 
\end{equation}
It will be slightly more convenient to work with $q_n$ rather than $p_n^{(k_n)}$. 
We also define the function
\begin{equation} \label{def:vt}
    v_t(r) = \inf_{\rho > 0} \{ L_0(r + \rho) - t \log \rho \}. 
\end{equation}

Lemma \ref{lem:pointwise} will follow from the following two results.

\begin{lemma} \label{lem:qnpointwise}
    For any $r > 0$, 
    \[ \frac{1}{n} \log |q_n(r)| \longrightarrow v_t(r) \]
    in probability as $n \to \infty$. 
\end{lemma}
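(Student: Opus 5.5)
The plan is to prove matching bounds for $\frac1n\log|q_n(r)|$: an upper bound from a Cauchy estimate together with Hartogs' lemma, and a lower bound obtained by averaging over the angular variables $U_j$ with Jensen's formula \eqref{eq:jensen}. The starting point is Taylor's formula
\[ p_n(r+h)=\sum_{k\ge 0}\frac{p_n^{(k)}(r)}{k!}h^k, \]
which shows that $q_n(r)$ is the coefficient of $h^{k_n}$ in $h\mapsto p_n(r+h)$. Equivalently, $q_n(r)=e_{m_n}(r-X_1,\dots,r-X_n)$, the elementary symmetric polynomial of degree $m_n$.

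\emph{Upper bound.} For every $\rho>0$ the Cauchy estimate gives
\[ \frac1n\log|q_n(r)|\le -\frac{k_n}{n}\log\rho+\max_{|w-r|=\rho}L_n(w). \]
For each fixed $w$, the strong law (using \eqref{eq:l+m}) gives $L_n(w)\to L_0(w)=L_0(|w|)$ almost surely, and $L_0$ is continuous off the origin by Proposition~\ref{prop:L0}. The functions $L_n$ are subharmonic and, by \eqref{eq:l+m}, locally uniformly bounded above almost surely. I would therefore apply Hartogs' lemma on a countable dense set of points to get
\[ \limsup_n\max_{|w-r|=\rho}L_n(w)\le\max_{|w-r|=\rho}L_0(|w|)=L_0(r+\rho), \]
where the last equality uses that $L_0$ is non-decreasing. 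Combined with $k_n/n\to t$ and an infimum over rational $\rho$, this yields $\limsup_n\frac1n\log|q_n(r)|\le v_t(r)$ almost surely.

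\emph{Lower bound for the average.} Condition on the radii $R_1,\dots,R_n$. The expression $q_n(r)=\sum_{|S|=m_n}\prod_{j\in S}(r-R_jU_j)$ is affine in each $U_j$. Applying Jensen's formula one variable at a time, since $\E_{U}\log|a+bU|=\log\max\{|a|,|b|\}$, shows that $\E_U\log|q_n(r)|$ is at least the logarithm of the modulus of any single coefficient of the multilinear expansion in the $U_j$. The coefficient of $\prod_{j\in T}U_j$ has modulus
\[ \binom{n-|T|}{m_n-|T|}\,r^{m_n-|T|}\prod_{j\in T}R_j . \]
I would take $T$ to be the indices of the $\lfloor sn\rfloor$ largest radii, for a parameter $s\in[0,1-t]$. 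By the law of large numbers for order statistics this gives
\[ \liminf_n\frac1n\E_U\log|q_n(r)|\ge\sup_{s}\Big[\int_{1-s}^1\log Q_0(u)\d u+(1-s)H\!\Big(\tfrac{1-t-s}{1-s}\Big)+(1-t-s)\log r\Big], \]
where $H$ is the binary entropy. It then remains to check by convex duality that this supremum equals $v_t(r)$. Writing $L_0(r+\rho)=\int_0^1\log\max\{r+\rho,Q_0(u)\}\d u$ and differentiating in $\rho$, the optimal $\rho$ should correspond to the optimal $s$ through $Q_0(1-s)\approx r+\rho$ together with a Stirling-type identity. Because of the max structure this is only a guess, and I expect some extra slack, for instance splitting the factors $(r-R_jU_j)$ rather than only selecting subsets, to be needed to reach exact equality.

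\emph{Main obstacle.} The hardest step is upgrading the averaged lower bound to one in probability. The upper bound holds almost surely, so $\frac1n\log|q_n(r)|\le v_t(r)+o(1)$, while its conditional mean is at least $v_t(r)-\eps$. This forces concentration near $v_t(r)$ provided the lower tail is controlled. I would try a one-sided small-ball estimate in the angular variables: a multilinear polynomial in independent uniform phases with a large coefficient has $\Prob_U(|q|\le e^{-\delta n}\max|\text{coeff}|)\to0$. I would prove this by revealing one $U_j$ at a time, using that $|a+bU|\ge\eta\max\{|a|,|b|\}$ except on an arc of measure $O(\eta)$, arranged as a supermartingale argument. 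This is presumably the one-sided concentration lemma, Lemma~\ref{lem:lower-bound}, mentioned in the overview.
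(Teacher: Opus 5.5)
Your architecture matches the paper's: a Cauchy estimate for the upper bound, the expansion of $q_n(r)=e_{m_n}(r-R_1U_1,\dots,r-R_nU_n)$ as a multi-affine polynomial in the phases, and Jensen's formula one variable at a time to get $\E_U\log|q_n(r)|\ge\log|a_{n,T}(r)|$. You then use the coefficient indexed by the largest radii, followed by a concentration step. With $\beta=1-s$, your supremum is exactly $\sup_{\beta\in[t,1]}B_t(\beta,r)$.

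\textbf{The gap.} You never show that this supremum equals $v_t(r)$. You call it a guess and even suspect that subset selection is insufficient, so as written your lower and upper bounds do not meet. Equality does hold, and no splitting of factors is needed (Lemma~\ref{lem:vt-bt}). The inequality $B_t(\beta,r)\le v_t(r)$ is the easy direction; it follows from $L_0(\rho)\ge\beta\log\rho+\int_\beta^1\log Q_0(u)\d u$ and minimizing over $\rho$. What you need is $v_t(r)\le\sup_\beta B_t(\beta,r)$, which the paper proves as follows.
\begin{itemize}
\item The function $J_r(\rho)=L_0(\rho)-t\log(\rho-r)$ tends to $+\infty$ as $\rho\downarrow r$ and as $\rho\to\infty$ (since $L_0(\rho)\ge\log\rho$ and $t<1$). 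Hence it has a minimizer $\rho_\ast$.
\item The left and right derivatives of $L_0(\rho)=\E\log\max\{\rho,R_1\}$ are $F_0(\rho-)/\rho$ and $F_0(\rho)/\rho$, respectively. Optimality of $\rho_\ast$ therefore gives $F_0(\rho_\ast-)\le\beta_\ast:=t\rho_\ast/(\rho_\ast-r)\le F_0(\rho_\ast)$.
\item Consequently $Q_0\le\rho_\ast$ on $(0,\beta_\ast)$ and $Q_0\ge\rho_\ast$ on $(\beta_\ast,1)$. So $L_0(\rho_\ast)=\beta_\ast\log\rho_\ast+\int_{\beta_\ast}^1\log Q_0(u)\d u$ holds with equality.
\item Substituting $\rho_\ast=r\beta_\ast/(\beta_\ast-t)$ gives $v_t(r)=J_r(\rho_\ast)=B_t(\beta_\ast,r)$.
\end{itemize}
This is your heuristic $Q_0(1-s)\approx r+\rho$ made exact. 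You also need care when $\mu_0$ has an atom at $0$. The order-statistics law of large numbers requires $Q_0(\beta)>0$, so one restricts to such $\beta$ and recovers the boundary value by letting $\beta$ decrease to $\max\{\mu_0(\{0\}),t\}$ (Lemma~\ref{lem:Gt}).

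\textbf{Smaller points.} First, Hartogs' lemma needs $\limsup_n L_n\le L_0$ at every point, or at least at Lebesgue-almost every point. A countable dense set does not suffice for subharmonic functions, since polar sets can be dense. Fubini gives almost sure convergence at Lebesgue-a.e.\ $w$, which is enough for the sub-mean-value proof of Hartogs. The paper avoids this altogether with truncated logarithms and an $\eps$-net (Lemma~\ref{lem:lln}).

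Second, for the lower tail, the bound $|a+bU|\ge\eta\max\{|a|,|b|\}$ off an arc of measure $O(\eta)$ loses $\log(1/\eta)$ per revealed phase, i.e.\ order $n$ in total. What actually drives a martingale argument is the mean-zero identity $\E\log|1+cU|=0$ plus a uniform moment bound. The paper uses Efron--Stein with conditional variances at most $\pi^2/12$, followed by Chebyshev (Lemma~\ref{lem:lower-bound}).

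In fact you can bypass the lower-tail estimate. Conditionally on the radii, $X_n=\frac1n\log|q_n(r)|\le\log2+\frac1n\sum_{j\le n}\log\max\{1,r+R_j\}$, which is almost surely bounded in $n$. So your almost sure upper bound and bounded convergence give $\E_U(X_n-v_t(r))_+\to0$. Combined with $\E_UX_n\ge v_t(r)-\eps$, Markov's inequality applied to $(v_t(r)-X_n)_+$ yields convergence in probability.
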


\begin{lemma} \label{lem:vt}
    For any $r > 0$, 
    \[ v_t(r) = \int_t^1 \log \max\left\{ r, Q_0(u) \frac{u-t}{u} \right\} \d u - t \log t - (1-t)\log(1-t). \]
\end{lemma}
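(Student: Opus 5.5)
The plan is to compute the infimum in \eqref{def:vt} by elementary one-variable calculus, working in the variable $s = r+\rho \in (r,\infty)$. By Proposition \ref{prop:L0},
\[
G(s) := L_0(s) - t\log(s-r) = \int_0^1 \log\max\{s, Q_0(u)\}\d u - t\log(s-r), \qquad s>r,
\]
so that $v_t(r) = \inf_{s>r} G(s)$. The function $s\mapsto \log\max\{s,Q_0(u)\}$ is absolutely continuous with derivative $\frac{1}{s}\mathbf{1}\{Q_0(u)<s\}$ for a.e.\ $s$. Integrating in $u$ and using $|\{u : Q_0(u) < s\}| = \Prob(R_1<s) = F_0(s^-)$, I expect the right derivative of $L_0$ at $s$ to be $F_0(s)/s$ and the left derivative to be $F_0(s^-)/s$. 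Hence the one-sided derivatives of $G$ are
\[
\frac{s}{s-r}\cdot\frac{1}{s}\,\big(h_\pm(s) - t\big), \qquad h(s) := F_0(s)\Big(1-\frac{r}{s}\Big),
\]
with $h_-$ built from $F_0(s^-)$.

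The key structural point is that $h$ is a product of two nonnegative nondecreasing functions on $(r,\infty)$. Moreover $h(s)\to 0$ as $s\downarrow r$, and $h(s)\to 1>t$ as $s\to\infty$. Therefore $G$ is nonincreasing on $(r,s^*]$ and nondecreasing on $[s^*,\infty)$, where $s^* = \inf\{s>r : h(s)\geq t\}$. This gives $v_t(r) = G(s^*)$, and no convexity is needed.

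Next I translate $s^*$ into the $u$-variable. Set
\[
u_0 = \inf\Big\{u\in(t,1) : Q_0(u)\tfrac{u-t}{u} > r\Big\}
\]
(with $u_0=1$ if this set is empty). The points $u\in(t,1)$ where the maximum in the target formula is $r$ are exactly those $u<u_0$, because $f_t$ from \eqref{def:ft} is nondecreasing and left-continuous. Using Proposition \ref{prop:inverse}-type reasoning ($F_0(Q_0(u))\ge u$, and $F_0(s)<u$ for $s<Q_0(u)$), I would check that $s^* = \frac{r u_0}{u_0-t}$, so $s^*-r = \frac{rt}{u_0-t}$, and that
\[
G(s^*) = u_0\log s^* + \int_{u_0}^1 \log Q_0(u)\d u - t\log(s^*-r).
\]
The last identity needs $\{u : Q_0(u) > s^*\}$ to agree with $(u_0,1)$ up to a set where $Q_0(u)=s^*$, which contributes the same value to either side.

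Finally, I verify algebraically that this equals the claimed expression. Using the antiderivative $(u-t)\log(u-t) - u\log u$, the target is
\[
(u_0-t)\log r + \int_{u_0}^1 \log Q_0\d u + (1-t)\log(1-t) - (u_0-t)\log(u_0-t) + u_0\log u_0 - t\log t - (1-t)\log(1-t).
\]
Substituting $s^*$ and $s^*-r$ into $G(s^*)$ gives
\[
u_0\big[\log r + \log u_0 - \log(u_0-t)\big] - t\big[\log r + \log t - \log(u_0-t)\big] + \int_{u_0}^1 \log Q_0\d u,
\]
which coincides with the target. The degenerate case $u_0=1$ is handled by the same formula.

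The main obstacle is the bookkeeping when $F_0$ has atoms or flat pieces. At an atom, $s^*$ sits at a jump of $h$, so the minimum is a corner of $G$ rather than a critical point. On a flat piece, $Q_0$ jumps and several $u$ values map to one $s$. I would handle this carefully by working with the one-sided derivatives above, together with the left-continuity properties from Proposition \ref{prop:quantile}, rather than assuming differentiability. If that proves messy, a fallback is to prove $G(s)\ge$ RHS for every $s$ via a pointwise inequality in $u$, and then exhibit $s^*$ attaining equality.
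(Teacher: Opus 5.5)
Your argument is correct, but it is organized differently from the paper's. The paper proves the lemma in two stages.
\begin{enumerate}
\item Lemma \ref{lem:vt-bt} proves the variational identity $v_t(r)=\sup_{t\le\beta\le1}B_t(\beta,r)$. The inequality $v_t\ge B_t(\beta,r)$ for every $\beta$ comes from $L_0(\rho)\ge\beta\log\rho+\int_\beta^1\log Q_0(u)\,du$. The reverse inequality uses a minimizer $\rho_\ast$ (obtained from continuity and coercivity of $J_r$), its one-sided derivatives, and $\beta_\ast=t\rho_\ast/(\rho_\ast-r)$.
\item A separate rearrangement step, using the function $D_t$ and the monotonicity of $f_t$, evaluates $\sup_\beta B_t(\beta,r)$ as the stated integral. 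It never identifies the maximizing $\beta$.
\end{enumerate}

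You merge these stages. The monotonicity of $h(s)=F_0(s)(1-r/s)$ gives unimodality of $G$ and locates $s^\ast$ without any compactness argument. The correspondence $Q_0(u)\le s\iff u\le F_0(s)$, under the decreasing substitution $u=ts/(s-r)$, carries $\{s>r:h(s)\ge t\}=[s^\ast,\infty)$ onto $\{u\in(t,1]:Q_0(u)\le ru/(u-t)\}$. The supremum of that set is your $u_0$, which confirms $s^\ast=ru_0/(u_0-t)$. Then $F_0(s^\ast-)\le u_0\le F_0(s^\ast)$ gives your formula for $L_0(s^\ast)$, so the step you only planned to check does go through.

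In effect you identify the paper's $\beta_\ast$ explicitly as the point where $f_t$ crosses $r$. This makes the supremum-over-$\beta$ step unnecessary and is shorter for this lemma. What the paper's route buys is the representation $v_t(r)=\sup_\beta B_t(\beta,r)$ itself. That is what Lemma \ref{lem:Gt} and the lower bound in Lemma \ref{lem:qnpointwise} actually use, since $B_t(\beta,r)$ is the growth rate of the single coefficient $a_{n,S_n}(r)$. Your computation also shows $G(s^\ast)=B_t(u_0,r)$, so together with the easy inequality it recovers that representation as well.
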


We can now prove Lemma \ref{lem:pointwise} assuming the previous two lemmata.

\begin{proof}[Proof of Lemma \ref{lem:pointwise}]
    Recall that 
    \[ p_n^{(k_n)}(z) = k_n! \sum_{1 \leq i_1 < i_2 < \cdots < i_{k_n} \leq n} \prod_{j \not\in \{i_1, \ldots, i_{k_n}\}} (z - X_j). \]
    From this form for $p_n^{(k_n)}$ and the rotational invariance of $\mu_0$, it follows that 
    \[ |p_n^{(k_n)}(z)| \eqd |p_n^{(k_n)}(|z|)| \] 
    for any $z \in \C$. 
    As a consequence, 
    \[ L_n^{(k_n)}(z) = \frac{1}{m_n} \log \left| \monicp(z) \right| \eqd L_n^{(k_n)}(|z|) \]
    for any $z \in \C$.  
    In addition, by the rotational invariance of $\mu_t$, we have $L_t(z) = L_t(|z|)$ for any $z \in \C$.
    Therefore, in order to prove Lemma \ref{lem:pointwise}, it suffices to prove that for any $r > 0$, 
    \begin{equation} \label{eq:pointwiseshow}
        L_n^{(k_n)}(r) \longrightarrow L_t(r)
    \end{equation}
    in probability as $n \to \infty$. 

    To this end, fix $r > 0$. 
    The leading coefficient of $q_n$ is $\binom{n}{k_n}$ by our choice of normalization, and hence
    \[ \monicp = \binom{n}{k_n}^{-1} q_n. \]
    By Stirling's approximation, 
    \[ \lim_{n \to \infty} \frac{1}{n} \log \binom{n}{k_n}^{-1} = t \log t + (1-t) \log (1-t). \]
    Thus, by Lemma \ref{lem:qnpointwise} 
    \[ \frac{1}{n} \log \left| \monicp(r) \right| \longrightarrow v_t(r) + t \log t + (1-t) \log (1-t) \]
    in probability as $n \to \infty$. 
     By Lemma \ref{lem:vt}, 
     \[ v_t(r) + t \log t + (1-t) \log (1-t) = \int_t^1 \log \max\left\{ r, Q_0(u) \frac{u-t}{u} \right\} \d u = (1-t)L_t(r), \]
     where we recall the definition of $L_t$ given in \eqref{def:lt}. 
     Since $n/m_n \to (1-t)^{-1}$ by assumption \eqref{eq:knlim}, it follows that
     \begin{align*}
         L^{(k_n)}_n(r) = \frac{1}{m_n} \log \left| \monicp(r) \right|
         = \frac{n}{m_n}\frac{1}{n} \log \left| \monicp(r) \right| 
         \longrightarrow L_t(r)
     \end{align*}
     in probability as $n \to \infty$. 
     This establishes \eqref{eq:pointwiseshow}, and the proof is complete. 
\end{proof}

The rest of the paper is devoted to the proofs of Lemmas \ref{lem:qnpointwise} and \ref{lem:vt}.
We prove Lemma \ref{lem:vt} in Section \ref{sec:vt}, and Lemma \ref{lem:qnpointwise} is established in Section \ref{sec:qnpointwise}.

\subsection{Proof of Lemma \ref{lem:vt}} \label{sec:vt}

Recall that $t \in (0, 1)$ is fixed.
Lemma \ref{lem:vt} will follow from the next lemma.
For $\beta \in (t, 1]$ and $r > 0$, we define 
\begin{equation} \label{def:Bt}
     B_t(\beta, r) = \beta \log \beta - (\beta - t) \log (\beta - t) - t \log t + (\beta - t) \log r + \int_\beta^1 \log Q_0(u) \d u, 
\end{equation}
with the convention that when $\beta = 1$, the integral term is zero; 
we can extend the definition to $\beta = t$ by taking a right limit (in the extended-real sense):
\[ B_t(t, r) = \lim_{\beta \downarrow t} B_t(\beta, r) = \int_t^1 \log Q_0(u) \d u. \]

\begin{lemma} \label{lem:vt-bt}
    For every $r > 0$, 
    \begin{equation} \label{eq:vtridenti}
        v_t(r) = \sup_{t \leq \beta \leq 1} B_t(\beta, r). 
    \end{equation}
\end{lemma}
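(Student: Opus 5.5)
The plan is to write $L_0$ itself as a supremum over a parameter $\beta$, so that $v_t$ becomes an inf–sup. Then we swap the order via a minimax theorem and evaluate the inner infimum over $\rho$ in closed form, which should produce exactly $B_t(\beta,r)$.

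\emph{Step 1: variational form of $L_0$.} For $x>0$ and $\beta\in[0,1]$ set $G(\beta,x)=\beta\log x+\int_\beta^1\log Q_0(u)\,du$, with values in $[-\infty,\infty)$. Then $\partial_\beta G=\log x-\log Q_0(\beta)$ a.e. This is nonincreasing in $\beta$ because $Q_0$ is non-decreasing (Proposition \ref{prop:quantile}). Hence $G(\cdot,x)$ is concave, and it is maximized at $\beta^*=\sup\{u:Q_0(u)\le x\}$. Comparing with Proposition \ref{prop:L0}(1) gives
\[ L_0(x)=\int_0^1\log\max\{x,Q_0(u)\}\,du=\sup_{0\le\beta\le1}G(\beta,x). \]
Moreover, if $x\ge Q_0(\delta)$, then $G(\cdot,x)$ is non-decreasing on $[0,\delta]$. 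So in that case the supremum may be restricted to $\beta\in[\delta,1]$.

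\emph{Step 2: the inner infimum.} Fix $\beta$ and minimize $\beta\log(r+\rho)-t\log\rho$ over $\rho$.
\begin{itemize}
\item If $\beta<t$, the infimum is $-\infty$ (let $\rho\to\infty$).
\item If $\beta=t$, the infimum is the limit $0$ as $\rho\to\infty$, giving $B_t(t,r)=\int_t^1\log Q_0$.
\item If $\beta>t$, the minimizer is $\rho^*=tr/(\beta-t)$, with $r+\rho^*=\beta r/(\beta-t)$. The resulting value, after adding $\int_\beta^1\log Q_0$, is exactly $B_t(\beta,r)$ from \eqref{def:Bt}.
\end{itemize}
Thus $\sup_\beta\inf_\rho[\,G(\beta,r+\rho)-t\log\rho\,]=\sup_{t\le\beta\le1}B_t(\beta,r)$. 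Weak duality, $\inf\sup\ge\sup\inf$, immediately gives $v_t(r)\ge\sup_{t\le\beta\le1}B_t(\beta,r)$.

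\emph{Step 3: the reverse inequality via Sion's minimax theorem.} This is the main obstacle. Two things need care: the function $\beta\log(r+\rho)-t\log\rho$ is not convex in $\rho$, and $\int_\beta^1\log Q_0$ may equal $-\infty$ at $\beta=0$.
\begin{itemize}
\item For convexity, reparametrize $\rho=e^s$. Then $\beta\log(r+e^s)-ts$ is convex and continuous in $s$ and affine in $\beta$, while $\int_\beta^1\log Q_0$ is concave and continuous in $\beta$ on $(0,1]$.
\item For integrability, first suppose $Q_0(0+)=a>0$. Then $\log Q_0$ is integrable on $(0,1)$ (it is bounded below by $\log a$, and \eqref{eq:l+m} handles the upper tail). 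Sion applies directly on the compact convex set $\beta\in[0,1]$ against $s\in\R$.
\item Otherwise $Q_0(\delta)\to0$. Choose $\delta<t$ with $Q_0(\delta)\le tr/(1-t)$, and restrict to $\rho\ge c:=Q_0(\delta)$. On this range Step 1 says $L_0(r+\rho)=\sup_{\delta\le\beta\le1}G(\beta,r+\rho)$. Therefore
\[ v_t(r)\le\inf_{\rho\ge c}\sup_{\delta\le\beta\le1}\bigl[G(\beta,r+\rho)-t\log\rho\bigr]. \]
Sion now applies on the compact set $[\delta,1]$, where everything is finite and continuous, against $s\ge\log c$. It yields $\sup_{\delta\le\beta\le1}\inf_{\rho\ge c}[\cdots]$.
\end{itemize}
Finally, we check that restricting to $\rho\ge c$ does not change the inner infima of Step 2.
\begin{itemize}
\item For $\beta\le t$, they are attained as $\rho\to\infty$.
\item For $\beta>t$, the minimizer satisfies $\rho^*=tr/(\beta-t)\ge tr/(1-t)\ge c$.
\end{itemize}
Hence the right-hand side equals $\sup_{t\le\beta\le1}B_t(\beta,r)$, which proves \eqref{eq:vtridenti}.
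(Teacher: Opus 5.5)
You take a genuinely different route for the hard inequality. The easy direction is the same as the paper's: your weak duality is exactly the bound $L_0(\rho)\ge\beta\log\rho+\int_\beta^1\log Q_0(u)\,du$ followed by the explicit minimization in $\rho$.

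For the reverse inequality the paper uses no minimax theorem. It shows that $J_r(\rho)=L_0(\rho)-t\log(\rho-r)$ is coercive on $(r,\infty)$, so $J_r$ has a minimizer $\rho_\ast$. It then computes $L'_{0,-}(\rho)=F_0(\rho-)/\rho$ and $L'_{0,+}(\rho)=F_0(\rho)/\rho$. From $J'_{r,-}(\rho_\ast)\le0\le J'_{r,+}(\rho_\ast)$ it reads off that $\beta_\ast=t\rho_\ast/(\rho_\ast-r)\in[F_0(\rho_\ast-),F_0(\rho_\ast)]$. This is precisely the condition for $L_0(\rho_\ast)=\beta_\ast\log\rho_\ast+\int_{\beta_\ast}^1\log Q_0(u)\,du$, so $v_t(r)=B_t(\beta_\ast,r)$ and the supremum is even attained. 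That argument is elementary and uniform in $\mu_0$: automatically $\beta_\ast\ge F_0(0)=\mu_0(\{0\})$ and $Q_0\ge\rho_\ast>0$ on $(\beta_\ast,1)$, so no $-\infty$ ever enters. Your convex-duality argument, with the nice substitution $\rho=e^s$, avoids the one-sided derivatives and the existence of a minimizer. The price is that Sion needs a real-valued function, and that is where your case analysis has a hole.

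\textbf{The gap.} Your second branch tacitly needs $Q_0(\delta)>0$. Otherwise $c=0$ and $\int_\beta^1\log Q_0(u)\,du=-\infty$ on part of $[\delta,1]$, so ``everything is finite and continuous'' is false there. This happens whenever $\mu_0$ has an atom at the origin. With $p=\mu_0(\{0\})>0$ one has $Q_0\equiv0$ on $(0,p]$, so $Q_0(0+)=0$ and you land in the second branch. Yet a $\delta<t$ with $0<Q_0(\delta)\le tr/(1-t)$ need not exist.

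For example, let $\mu_0$ put mass $\tfrac12$ at the origin and mass $\tfrac12$ uniformly on the unit circle. Then $Q_0=0$ on $(0,\tfrac12]$ and $Q_0=1$ on $(\tfrac12,1)$. No admissible $\delta$ exists if $t\le\tfrac12$, nor, for any $t$, if $r<(1-t)/t$. The paper explicitly cares about this situation (cf.\ Lemma \ref{lem:Gt}).

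The repair is short: split according to $Q_0(p+)$ rather than $Q_0(0+)$. Since $G(\beta,\cdot)\equiv-\infty$ for $\beta<p$, Step 1 holds with the supremum taken over $[p,1]$ only.
\begin{itemize}
\item If $Q_0(p+)>0$, then $\log Q_0$ is bounded below on $(p,1)$, and Sion applies on $[p,1]\times\R$ exactly as in your first case.
\item If $Q_0(p+)=0$ and $p<1$, pick $\delta>p$ with $0<Q_0(\delta)\le tr/(1-t)$. Your requirement $\delta<t$ is unnecessary: for $\delta\ge t$ your computation still gives $v_t(r)\le\sup_{\delta\le\beta\le1}B_t(\beta,r)\le\sup_{t\le\beta\le1}B_t(\beta,r)$.
\item The case $p=1$ is immediate from Step 2.
\end{itemize}
With this modification your minimax proof is complete.
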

\begin{proof}
    It follows from the definition of $v_t$ given in \eqref{def:vt} that
    \[ v_t(r) = \inf_{\rho > r} \{ L_0(\rho) - t \log (\rho - r) \}. \]
    
    We begin by showing 
    \begin{equation} \label{eq:firstineqvt}
        v_t(r) \geq \sup_{t \leq \beta \leq 1} B_t(\beta, r) 
    \end{equation}
    for every $r > 0$. 
    Fix $\beta \in [t, 1]$ and $\rho > r$. 
    We have 
    \begin{align*}
        L_0(\rho) &= \int_0^1 \log \max \{ \rho, Q_0(u) \} \d u \\
        &\geq \int_0^\beta \log \rho \d u + \int_{\beta}^1 \log Q_0(u) \d u \\
        &= \beta \log \rho + \int_\beta^1 \log Q_0(u) \d u, 
    \end{align*}
    and so 
    \[ L_0(\rho) - t \log (\rho - r) \geq \int_\beta^1\log Q_0(u) \d u + \beta \log \rho - t \log (\rho - r). \]
    Taking the infimum in  $\rho$ yields
    \[ v_t(r) \geq \int_\beta^1 \log Q_0(u) \d u + \inf_{\rho > r} \{ \beta \log \rho - t \log (\rho - r) \}. \]
    A standard minimization argument shows the expression is minimized when 
    \[ \frac{\beta}{\rho} = \frac{t}{\rho - r} \text{ for } \beta > t, \]
    and hence
    \begin{align*}
        \inf_{\rho > r} \{ \beta \log \rho - t \log (\rho - r) \} = (\beta - t) \log r + \beta \log \beta - (\beta - t) \log (\beta - t) - t \log t. 
    \end{align*}
    When $\beta = t$, 
    \[ \inf_{\rho > r} \{ \beta \log \rho - t \log (\rho - r) \} = t \inf_{\rho > r} \log \frac{\rho}{\rho - r} = 0. \]
    In either case, we conclude that 
    \[ v_t(r) \geq B_t(\beta, r) \]
    for every $\beta \in [t, 1]$. 
    Taking a supremum in  $\beta$ gives \eqref{eq:firstineqvt}. 

    We now only need to establish the reverse inequality: 
    \begin{equation} \label{eq:secondineqvt}
        v_t(r) \leq \sup_{\beta \in [t, 1]} B_t(\beta, r), \quad r > 0.
    \end{equation}
    Fix $r > 0$. 
    For convenience, define
    \begin{equation} \label{def:Jr}
        J_r(\rho) = L_0(\rho) - t \log (\rho - r), \quad \rho > r. 
    \end{equation}
    Observe that $J_r(\rho) \to \infty$ as $\rho \downarrow r$. 
    In addition, since 
    \[ J_r(\rho) \geq \log \rho - t \log \rho - t \log (1 - r/\rho) = (1-t) \log \rho - t \log (1 - r/\rho)  \]
    by Proposition \ref{prop:L0}, it follows that $J_r(\rho) \to \infty$ as $\rho \to \infty$. 
    Since $J_r$ is continuous, it attains its minimum at some $\rho_\ast > r$. 

    Recall the definition of $F_0$ in \eqref{def:F0}:
    \[ F_0(\rho) = \Prob(|X_1| \leq \rho), \quad \rho \geq 0. \]
    We also use 
    \[ F_0(\rho-) = \Prob(|X_1| < \rho), \quad \rho > 0, \]
    and we introduce the notation
    \begin{align*}
        L'_{0, -}(\rho) &= \lim_{h \downarrow 0} \frac{L_0(\rho) - L_0(\rho - h)}{h}, \\
        L'_{0, +}(\rho) &= \lim_{h \downarrow 0} \frac{L_0(\rho + h) - L_0(\rho)}{h}
    \end{align*}
    for the left and right derivatives of $L_0$, provided the limits exist.
    For the left derivative, using the representation in Proposition \ref{prop:L0}, 
    \[ \frac{L_0(\rho) - L_0(\rho-h)}{h} = \E \left[ \frac{\log \max\{\rho, |X_1|\} - \log \max \{\rho - h, |X_1|\}}{h} \right]. \]
    For fixed $|X_1| = x_1$, the expression inside the expectation converges to 
    \[ f(x_1) = \begin{cases}
        \frac{1}{\rho} & \text{ if } x_1 < \rho, \\
        0 & \text{ if } x_1 \geq \rho.
    \end{cases} \]
    In addition, for $h$ sufficiently small, 
    \[ 0 \leq \frac{\log \max \{\rho, x_1\} - \log \max \{\rho - h, x_1\}}{h} \leq \frac{\log \rho - \log (\rho - h)}{h} \leq \frac{2}{\rho}. \]
    Thus, the dominated convergence theorem applies, and we obtain
    \begin{equation} \label{eq:leftder}
        L'_{0, -}(\rho) = \E \left[f(|X_1|)\right] = \frac{F_0(\rho-)}{\rho}. 
    \end{equation}
    By similar reasoning, 
    \begin{equation} \label{eq:rightder}
        L'_{0, +}(\rho) = \frac{F_0(\rho)}{\rho}. 
    \end{equation}

    Recall the definition of $J_r$ given in \eqref{def:Jr}, and note that $J_r$ is a function of $L_0$.  
    Thus, we can now easily compute the left and right derivatives of $J_r$, which we denote as $J'_{r, -}$ and $J'_{r, +}$, respectively; in particular, the left and right derivatives of $J_r$ exist since the left and right derivatives of $L_0$ exist and $\rho \mapsto \log (\rho - r), \rho > r$ is differentiable.  
    Since $\rho_\ast$ is the global minimizer of $J_r$, we see that
    \begin{equation*}
        J'_{r, -}(\rho_\ast) \leq 0 \leq J'_{r,+}(\rho_\ast). 
    \end{equation*}
    Thus, from \eqref{eq:leftder} and \eqref{eq:rightder}, 
    \[ J'_{r,-}(\rho_\ast) = \frac{F_0(\rho_\ast-)}{\rho_\ast} - \frac{t}{\rho_\ast - r} \leq 0 \]
    and 
    \[ J'_{r,+}(\rho_\ast) = \frac{F_0(\rho_\ast)}{\rho_\ast} - \frac{t}{\rho_\ast - r} \geq 0. \]
    It follows that 
    \[ F_0(\rho_\ast-) \leq \frac{t \rho_\ast}{\rho_\ast - r} \leq F_0(\rho_\ast). \]
    Set 
    \begin{equation} \label{eq:defbetaast}
        \beta_\ast = \frac{t \rho_\ast}{\rho_\ast - r} > t.
    \end{equation}
    If $u < \beta_\ast$, then $u < F_0(\rho_\ast)$, and hence $Q_0(u) \leq \rho_\ast$. 
    Similarly, if $u > \beta_\ast$, then $u > F_0(\rho_\ast-)$ and $Q_0(u) \geq \rho_\ast$. 
    This implies that 
    \[ L_0(\rho_\ast) = \int_{0}^{\beta_\ast} \log \rho_\ast \d u + \int_{\beta_\ast}^1 \log Q_0(u) \d u = \beta_\ast \log \rho_\ast + \int_{\beta_\ast}^1 \log Q_0(u) \d u. \]
    Thus, by the choice of $\beta_\ast$ in \eqref{eq:defbetaast}, we see that
    \[ \rho_\ast = \frac{r \beta_\ast}{\beta_\ast - t} \quad \text{and} \quad \rho_\ast - r = \frac{rt}{\beta_\ast - t}, \]
    which allows us to arrive at 
    \begin{align*}
        v_t(r) &= J_r(\rho_\ast) \\
        &= \beta_\ast \log \rho_\ast + \int_{\beta_\ast}^1 \log Q_0(u) \d u - t \log (\rho_\ast - r) \\
        &= B_t(\beta_\ast, r). 
    \end{align*}
    This means
    \[ v_t(r) = B_t(\beta_\ast, r) \leq \sup_{\beta \in [t, 1]} B_t(\beta, r), \]
    which is precisely \eqref{eq:secondineqvt}.
    Combining \eqref{eq:firstineqvt} and \eqref{eq:secondineqvt} completes the proof. 
\end{proof}

With Lemma \ref{lem:vt-bt} in hand, we can now complete the proof of Lemma \ref{lem:vt}. 

\begin{proof}[Proof of Lemma \ref{lem:vt}]
    Let $f_t$ be given as in \eqref{def:ft}. 
    It follows from Proposition \ref{prop:quantile} that $f_t$ is non-decreasing.
    For $t \leq \beta \leq 1$ and $x \in \R$, define
    \[ D_t(\beta, x) = (\beta - t) x + \int_\beta^1 \log f_t(u) \d u. \]
    It follows that 
    \[ D_t(\beta, x) = \int_t^\beta x\d u + \int_\beta^1 \log f_t(u) \d u \leq \int_t^1 \max \{ x, \log f_t(u) \} \d u \]
    for any $t \leq \beta \leq 1$ and any $x \in \R$. 
    
    We claim that 
    \begin{equation} \label{eq:supDt}
        \sup_{\beta \in [t, 1]} D_t(\beta, x) = \int_t^1 \max\{ x, \log f_t(u) \} \d u 
    \end{equation}
    for any $x \in \R$. 
    Indeed, fix $x \in \R$. 
    Since $f_t$ is non-decreasing, suppose there exists $\beta_x$ with $t < \beta_x < 1$ so that $\log f_t(u) \leq x$ for all $u < \beta_x$ and $\log f_t(u) \geq x$ for all $u > \beta_x$.
    In this case, we see that \eqref{eq:supDt} holds as the supremum is attained at $\beta = \beta_x$.  
    On the other hand, if there does not exist such a $\beta_x$, then either $\log f_t(u) \leq x$ for all $u$ or $\log f_t(u) \geq x$ for all $u$.  
    This implies \eqref{eq:supDt} holds since the supremum is attained at either $\beta = t$ or $\beta = 1$.
    Thus, we conclude that \eqref{eq:supDt} holds.

    A direct calculation shows that 
    \[ \int_\beta^1 \log \left( \frac{u-t}{u} \right) \d u = \beta \log \beta - (\beta-t) \log(\beta-t) + (1-t) \log(1-t). \]
    Thus, for any $r > 0$,  
    \[ D_t(\beta, \log r) = B_t(\beta, r) + t \log t + (1-t) \log (1-t), \]
    where $B_t$ is defined in \eqref{def:Bt}. 
    Taking a supremum in $\beta$ and applying \eqref{eq:vtridenti} and \eqref{eq:supDt} yields
    \begin{align*}
        v_t(r) + t \log t + (1-t) \log (1-t) &= \int_t^1 \max \{ \log r, \log f_t(u) \} \d u \\
        &= \int_t^1 \log \max \{r, f_t(u) \} \d u,
    \end{align*}
    and the proof is complete.
\end{proof}

\subsection{Proof of Lemma \ref{lem:qnpointwise}} \label{sec:qnpointwise}

We now turn to the proof of Lemma \ref{lem:qnpointwise}. 
We first develop a number of tools needed for the proof; the proof of Lemma \ref{lem:qnpointwise} appears at the end of the section. 

\begin{lemma} \label{lem:lln}
    For any compact set $K \subset \C$, 
    \begin{equation} \label{eq:llnbnd}
        \limsup_{n \to \infty} \sup_{w \in K} \frac{1}{n} \log |p_n(w)| \leq \sup_{w \in K} \int_\C \log |w - z| \d \mu_0(z) 
    \end{equation}
    almost surely. 
\end{lemma}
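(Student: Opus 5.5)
The plan is to reduce the uniform statement to a pointwise strong law of large numbers on a finite net, and to use the one-sided structure of $\log$ (bounded above, not below) to control the oscillation between net points. Fix a compact $K$ and write $\frac{1}{n}\log|p_n(w)| = \frac{1}{n}\sum_{j=1}^n \log|w - X_j|$.

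The logarithm may be $-\infty$, so I truncate from below. For $\eps \in (0,1)$ set $\log_\eps(x) = \log\max\{x, \eps\}$. Then $\log|w-X_j| \le \log_\eps|w-X_j|$, so
\[ \sup_{w\in K}\frac{1}{n}\log|p_n(w)| \le \sup_{w\in K}\frac{1}{n}\sum_{j=1}^n \log_\eps|w-X_j|. \]
The function $w\mapsto \log_\eps|w-x|$ is Lipschitz with constant $1/\eps$, uniformly in $x$.

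Next I discretize. Choose a finite $\delta$-net $N_\delta\subset K$. For each $w\in K$ there is $w'\in N_\delta$ with
\[ \frac{1}{n}\sum_j \log_\eps|w-X_j| \le \frac{1}{n}\sum_j \log_\eps|w'-X_j| + \frac{\delta}{\eps}. \]
For each fixed $w'$, the summands are i.i.d. and integrable: they are bounded below by $\log\eps$ and above by $\log_+|w'| + \log_+|X_j| + \log 2$, which is integrable by \eqref{eq:l+m}. The strong law of large numbers, applied on the finite set $N_\delta$, gives almost surely
\[ \limsup_{n\to\infty} \sup_{w\in K}\frac{1}{n}\log|p_n(w)| \le \max_{w'\in N_\delta} \E\bigl[\log_\eps|w'-X_1|\bigr] + \frac{\delta}{\eps}. \]

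Finally I remove the truncation. Take $\delta = \eps^2$, let $\eps\downarrow 0$ along a countable sequence (so the almost sure statements intersect), and show that
\[ \sup_{w\in K}\E\bigl[\log_\eps|w-X_1|\bigr] \longrightarrow \sup_{w\in K} L_{\mu_0}(w). \]
This step is the main obstacle, because it requires uniformity in $w$. I would write $\log_\eps|w-x| - \log|w-x| = \log_+(\eps/|w-x|)$ and use rotational invariance together with Jensen's formula \eqref{eq:jensen}. Averaging over the angle gives
\[ \E\log_\eps|w-X_1| = \E\, \frac{1}{2\pi}\int_0^{2\pi} \log\max\{|w-R_1e^{i\theta}|, \eps\}\d\theta. \]
For $|w|, R_1$ in a bounded range, I would show that the angular average of $\log_+(\eps/|w - R e^{i\theta}|)$ is at most $C\eps\log(1/\eps)$ uniformly in $w$ and $R$. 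This holds because the set of $\theta$ with $|w-Re^{i\theta}|<\eps$ has measure $O(\eps/R)$; small $R$ needs separate care, and there the bound follows from $|w|$ and $R$ both being small.

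An alternative that avoids the uniformity issue is to use upper semicontinuity directly. The map $w\mapsto \E\log_\eps|w-X_1|$ is continuous and decreases pointwise to $L_{\mu_0}(w)$, which is upper semicontinuous. By a Dini-type argument on the compact set $K$, the supremum of a decreasing sequence of continuous functions converges to the supremum of its limit. This is the version I would actually write: it needs only monotone convergence in $\eps$ and compactness, and it gives \eqref{eq:llnbnd}.
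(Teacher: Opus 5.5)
Your proposal is correct and follows essentially the same route as the paper. The paper also truncates the logarithm from below (your $\log_\eps$ is its $\log_T$ with $T=\log(1/\eps)$), uses Lipschitz continuity, a finite net and the strong law, and then removes the truncation by monotone convergence plus a compactness (Dini-type) argument, implemented there by extracting a convergent subsequence of near-maximizers. The rotational-invariance route you sketched first would not give the needed uniform bound, since $\E\log_\eps|w-X_1|-L_{\mu_0}(w)$ need not be uniformly small on $K$: it equals $+\infty$ at $w=0$ when $\mu_0$ has an atom at the origin. You were right to commit to the Dini argument instead.
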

\begin{remark}
   While the function 
   \[ w \mapsto \int_\C \log |w-z| \d \mu_0(z) \]
   appearing on the right-hand side of \eqref{eq:llnbnd} never takes the value $+\infty$ due to assumption \eqref{eq:lm}, it can take the value $-\infty$. 
   However, by Fubini's theorem and the fact that $\log |\cdot|$ is locally integrable, it follows that the function is finite Lebesgue-almost everywhere. 
\end{remark}
\begin{proof}[Proof of Lemma \ref{lem:lln}]
    For each $T > 0$, define the truncated logarithm
    \[ \log_T r = \max \{\log r, -T \}, \quad r \geq 0. \]
    For now, fix $T > 0$. 
    Since the function $\log_T |\cdot|$ is $e^T$-Lipschitz continuous on $[0, \infty)$, we see that 
    \begin{equation} \label{eq:ftlipschitz}
         \left|\log_T|z - w| - \log_T |z - w'|\right| \leq e^T |w - w'|
    \end{equation}
    for any $w, w', z \in \C$. 
    Choose $r > 0$ so that $K \subset \{z \in \C : |z| \leq r\}$. 
    Then $|\log_T|w-z|| \leq T + \log (1 + r + |z|)$ for any $w \in K$ and $z \in \C$. 
    By assumption \eqref{eq:lm}, this implies  
    \[ \E\left[\sup_{w \in K} \left| \log_T|w - X_1| \right|\right] < \infty. \]
    
    Let $\eps > 0$. 
    Since $K$ is compact, take $w_1, \ldots, w_N \in K$ to be a finite $\eps$-net of $K$, where $N$ is fixed, depending only on $\eps$ and $K$.  
    By the strong law of large numbers and the union bound, 
    \begin{equation} \label{eq:ftslln}
        \max_{l \in [N]} \left| \frac{1}{n} \sum_{j=1}^n \log_T|w_l - X_j| - \E[\log_T|w_l - X_1|] \right| \longrightarrow 0
    \end{equation}
    almost surely as $n \to \infty$.
    Using \eqref{eq:ftlipschitz}, we obtain
    \begin{align*}
        \sup_{w \in K} &\left| \frac{1}{n} \sum_{j=1}^n \log_T|w - X_j| - \E \left[ \log_T|w - X_1|\right] \right| \\
        &\qquad\qquad\leq \max_{l \in [N]} \left| \frac{1}{n} \sum_{j=1}^n \log_T|w_l - X_j| - \E \left[ \log_T|w_l - X_1|\right]\right| + 2 e^T \eps.  
    \end{align*}
    In view of \eqref{eq:ftslln} and the fact that $\eps > 0$ is arbitrary, we conclude that
    \[ \sup_{w \in K} \left| \frac{1}{n} \sum_{j=1}^n \log_T|w- X_j| - \E \left[ \log_T|w - X_1|\right] \right|  \longrightarrow 0 \]
    almost surely as $n \to \infty$. 
    Since 
    \begin{equation} \label{eq:logftineq}
        \log |w - X_1| \leq \log_T|w - X_1| 
    \end{equation}
    for any $w \in \C$, we conclude that
    \[ \limsup_{n \to \infty} \sup_{w \in K} \frac{1}{n} \log |p_n(w)| \leq \sup_{w \in K} \E \left[ \log_T|w - X_1|     \right]. \]
    In order to complete the proof, it only remains to show 
    \begin{equation} \label{eq:supwKdown}
        \sup_{w \in K} \E \left[ \log_T|w - X_1| \right] \downarrow \sup_{w \in K} \E \left[ \log |w-X_1| \right]
    \end{equation}
    as $T \to \infty$. 
    In fact, by \eqref{eq:logftineq}, we see that 
    \[ \sup_{w \in K} \E \left[ \log|w - X_1| \right] \leq \liminf_{T \to \infty} \sup_{w \in K} \E \left[ \log_T |w-X_1| \right], \]
    hence, it suffices to show
    \begin{equation} \label{eq:logtlimsup}
        \sup_{w \in K} \E \left[ \log|w - X_1| \right] \geq \limsup_{T \to \infty} \sup_{w \in K} \E \left[ \log_T |w-X_1| \right]. 
    \end{equation}

    To start, we claim that, for each $w \in K$, we have the pointwise convergence
    \begin{equation} \label{eq:logTptwise}
        \E \left[ \log_T|w - X_1| \right] \downarrow \E \left[ \log |w-X_1| \right] \in [-\infty, \infty) 
    \end{equation}
    as $T \to \infty$.
    Indeed, this can be deduced by considering the positive and negative parts of the random variable $Y_w = \log|w-X_1|$. 
    Its positive part $Y_{w,+}$ satisfies $Y_{w,+} \leq \log (1 + r + |X_1|)$.
    This implies the positive part has finite expectation; the negative part $Y_{w,-}$ may have infinite expectation,  but $\E[Y_w] = \E[Y_{w,+}] - \E[Y_{w,-}] \in [-\infty, \infty)$ is still well-defined.
    We express the truncated logarithm as
    \[ \log_T|w - X_1| = Y_{w,+} - \min\{Y_{w,-}, T\}. \]
    Since $\min\{Y_{w,-}, T\} \uparrow Y_{w,-}$ as $T \to \infty$, \eqref{eq:logTptwise} follows from the monotone convergence theorem. 

     Let $(T_l)_{l =1}^\infty$ be a positive sequence of real numbers tending to infinity, and choose $w_l \in K$ so that 
     \begin{equation} \label{eq:logsupbnd}
        \E[ \log_{T_l} |w_l - X_1|] \geq \sup_{w \in K} \E[\log_{T_l} |w - X_1|] - \frac{1}{l}. 
     \end{equation}
     By compactness of $K$, there exists a convergent subsequence of $(w_l)$; for notational simplicity, we denote the subsequence as $(w_l)$, and so $w_l \to w_\infty$ for some $w_\infty \in K$.  
     For any fixed $T > 0$, 
     \[ \E[\log_{T_l}|w_l - X_1|] \leq \E [\log_T |w_l - X_1|] \]
     whenever $T_l \geq T$. 
     Since $w \mapsto \E [\log_T |w - X_1|]$ is continuous, we obtain
     \[ \limsup_{l \to \infty} \E [\log_{T_l} |w_l - X_1|] \leq \E [\log_T |w_\infty - X_1|]. \]
     Since this is true for any $T > 0$, we can use the pointwise convergence of \eqref{eq:logTptwise} to conclude that 
     \begin{align*}
         \limsup_{l \to \infty} \sup_{w \in K} \E [\log_{T_l} |w - X_1|] &\leq \limsup_{l \to \infty} \E [\log_{T_l} |w_l - X_1|] \\
         &\leq \E [\log |w_\infty - X_1|] \\
         &\leq \sup_{w \in K} \E [\log |w - X_1|], 
     \end{align*}
     where the first inequality follows from \eqref{eq:logsupbnd}.
     This yields \eqref{eq:logtlimsup}, and the proof is complete.
\end{proof}

Recall the definition of $q_n$ in \eqref{def:qn}, and fix $r > 0$. 
Take $\rho > 0$, and note that Cauchy's integral formula implies 
\[ |q_n(r)| \leq \frac{1}{\rho^{k_n}} \max_{w \in \C: |w - r| = \rho} |p_n(w)| \]
for any $\rho > 0$. 
By Proposition \ref{prop:L0}, $L_0$ is non-decreasing, so
\[ \sup_{w \in \C: |w - r| = \rho} L_0(|w|) = L_0(r + \rho). \]
Applying Lemma \ref{lem:lln} with $K = \{w \in \C : |w-r| = \rho\}$, we find
\begin{align*}
    \limsup_{n \to \infty} \frac{1}{n} \log |q_n(r)| \leq L_0(r + \rho) - t \log \rho 
\end{align*}
almost surely. 
By the definition of $v_t$ in \eqref{def:vt} as the infimum, we can take a sequence $(\rho_l)$ tending to this infimum and work on the intersection of these probability one events to conclude that 
\begin{equation} \label{eq:limsupupper}
    \limsup_{n \to \infty} \frac{1}{n} \log |q_n(r)| \leq v_t(r) 
\end{equation}
almost surely.

In order to establish Lemma \ref{lem:qnpointwise}, it remains to establish the analogous convergence statement for $\liminf_{n \to \infty} \frac{1}{n}\log |q_n(r)|$. 
We now introduce the tools we will need. 

A \emph{multi-affine polynomial} is a multivariate polynomial that is an affine transformation in each of its variables separately when all other variables are held constant. 

\begin{lemma} \label{lem:lower-bound}
    Consider a nonzero multi-affine polynomial $f$ given by 
    \[ f(u_1, \ldots, u_N) = \sum_{S \subset [N]} a_S u_S \]
    for some complex coefficients $a_S \in \C, S \subset [N]$, where 
    \[ u_S = \prod_{j \in S} u_j. \]
    Recalling that $U_1, \ldots, U_N$ are independent random variables uniformly distributed on the unit circle centered at the origin in the complex plane, we have
    \begin{equation} \label{eq:expectationf}
        \E[ \log |f(U_1, \ldots, U_N)|] \geq \log \max_{S \subset [N]} |a_S| 
    \end{equation}
    and 
    \begin{equation} \label{eq:logvarbnd}
        \Var (\log |f(U_1, \ldots, U_N)|) \leq C N, 
    \end{equation}
    where $C > 0$ is an absolute constant. 
    In particular, for any nonzero coefficient $a_S$ and every $\eps > 0$, 
    \begin{equation} \label{eq:chebyshev-lower-bound}
        \Prob( \log |f(U_1, \ldots, U_N)| < \log |a_S| - \eps N) \leq \frac{C}{\eps^2 N}. 
    \end{equation}
\end{lemma}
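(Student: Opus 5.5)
We plan to prove the expectation bound \eqref{eq:expectationf} and the variance bound \eqref{eq:logvarbnd} separately. The tail estimate \eqref{eq:chebyshev-lower-bound} then follows at once from Chebyshev's inequality. Indeed, for any nonzero $a_S$, \eqref{eq:expectationf} gives $\E[\log|f|] \geq \log|a_S|$. Hence the event $\{\log|f| < \log|a_S| - \eps N\}$ is contained in $\{\log|f| - \E\log|f| < -\eps N\}$, whose probability is at most $CN/(\eps N)^2$.

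For \eqref{eq:expectationf} we would argue by induction on $N$, integrating out one variable at a time with Jensen's formula. Write $f = g(u_1,\ldots,u_{N-1}) + u_N h(u_1,\ldots,u_{N-1})$. For fixed values of the other variables, the function $u \mapsto |g + u h|$ averaged over the unit circle satisfies
\[ \frac{1}{2\pi}\int_0^{2\pi} \log|g + e^{i\theta}h| \d\theta = \log\max\{|g|,|h|\} \geq \max\{\log|g|, \log|h|\}. \]
This is \eqref{eq:jensen} after factoring out $|h|$ or $|g|$; the degenerate cases are trivial. Taking expectation over $U_1,\ldots,U_{N-1}$ gives $\E\log|f| \geq \max\{\E\log|g|, \E\log|h|\}$. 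Both $g$ and $h$ are multi-affine in $N-1$ variables, and their coefficients are exactly the $a_S$ with $N\notin S$ (for $g$) and $N\in S$ (for $h$). The induction hypothesis therefore yields the maximum over all $S$. One must check that a possibly zero $g$ or $h$ causes no trouble; the bound only needs the one of the two carrying the maximal coefficient, which is nonzero.

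For \eqref{eq:logvarbnd} we would use the Efron--Stein inequality:
\[ \Var(\log|f|) \leq \sum_{j=1}^N \E\left[\Var_j(\log|f|)\right], \]
where $\Var_j$ is the conditional variance in $U_j$ with the other variables fixed. It then suffices to show that each conditional variance is bounded by an absolute constant $C$. Fix the other variables, so that $\log|f| = \log|a + bU_j|$ with $a,b$ deterministic and not both zero. By scaling we reduce to $\log|1 + cU|$ or $\log|c + U|$ with $|c| \leq 1$, that is, to $\log|c+U|$ with $|c|\leq 1$ up to an additive constant. This leaves the task of showing
\[ \sup_{|c|\leq 1} \E\left[\log^2|c + U|\right] < \infty. \]
This holds because the bound $|c + e^{i\theta}| \geq |\sin(\theta - \theta_0)|$-type estimates reduce the integral to the finite quantity $\int_0^{2\pi} \log^2|1 - e^{i\theta}| \d\theta$. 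More precisely, for $|c|\leq 1$ one has $|c + e^{i\theta}| \geq \frac{1}{2}\,|\,|c| e^{i\alpha} + e^{i\theta}|$ with the worst case $|c|=1$, and $\log^2$ of the distance to a point on the circle is integrable in $\theta$.

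The main obstacle is making this uniform one-variable second-moment bound clean, particularly near $|c| = 1$ where the logarithmic singularity lies on the circle. We would handle it by noting that $|c + e^{i\theta}| \geq |\sin(\theta - \arg(-c))|$ for $|c|\leq 1$, which follows from the distance from $e^{i\theta}$ to the segment through $0$ in direction $-c$. The integral of $\log^2|\sin|$ is finite, and the upper side is bounded since $|c + U| \leq 2$. The Efron--Stein step itself is standard, given independence of the $U_j$.
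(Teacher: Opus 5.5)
Your proposal is correct and follows the paper's proof in all essentials. The expectation bound comes from integrating out one variable at a time with Jensen's formula, the variance bound from Efron--Stein with a uniform one-variable conditional variance estimate, and the tail bound from Chebyshev. The only real difference is that you control $\sup_{|c|\le 1}\E[\log^2|c+U|]$ through the geometric bound $|c+e^{i\theta}|\ge|\sin(\theta-\arg(-c))|$, whereas the paper expands $\log|1+ce^{i\theta}|$ in a Fourier series and uses Parseval (plus an $L^2$ limit at $c=1$) to get the explicit constant $\pi^2/12$.

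There are two points you assert without comment that the paper checks by a separate induction, and you should record them:
\begin{itemize}
\item The conditional coefficients $a,b$ are almost surely not both zero. This follows directly from your own expectation bound: a nonzero multi-affine $g$ has $\E\log|g|>-\infty$, hence $g\neq 0$ almost surely.
\item $\log|f(U_1,\ldots,U_N)|\in L^2$, which is a hypothesis of Efron--Stein. You can get this by the same conditioning induction the paper uses, or by applying Efron--Stein to truncations of $\log|f|$ and passing to the limit with Fatou's lemma.
\end{itemize}
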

\begin{proof}
    We consider $\T$, the unit circle centered at the origin in the complex plane with the normalized Haar measure (i.e., the uniform probability measure). 
    For $a, b \in \C$, not both zero, 
    \[ \log |a + b U_1| \eqd \log \max\{|a|, |b| \} + \log |1 + c U_1|, \]
    where $0 \leq c \leq 1$. 
    For $0 \leq c < 1$, we can write $\log |1 + ce^{i \theta}|$ as the real part of the principal branch of the complex logarithm to obtain 
    \[ \log |1 + c e^{i \theta}| = \sum_{l=1}^\infty (-1)^{l+1} \frac{c^l \cos(l \theta)}{l} \]
    for $\theta \in [0, 2\pi)$.
    Thus, 
    \[ \E [ \log |1 + cU_1|] = \frac{1}{2 \pi} \int_0^{2\pi} \log |1 + c e^{i\theta}|\d \theta = 0, \]
    and by orthogonality 
    \begin{equation*}
        \E [ \log^2 |1 + c U_1|] = \frac{1}{2} \sum_{l=1}^\infty \frac{c^{2l}}{l^2} \leq \frac{1}{2} \sum_{l=1}^\infty \frac{1}{l^2} = \frac{\pi^2}{12}. 
    \end{equation*}
    The result can also be established at $c = 1$ using a standard $L^2$-approximation argument: 
    \[ \left\| \log |1 + c \cdot| - \log |1 + \cdot | \right\|^2_{L^2(\T)} = \frac{1}{2} \sum_{l=1}^\infty \frac{(1-c^l)^2}{l^2} \longrightarrow 0 \]
    as $c \uparrow 1$. 
    We conclude that 
    \begin{equation} \label{eq:logU_1}
        \E[ \log |a + b U_1|] = \log \max \{|a|, |b|\}
    \end{equation}
    and 
    \begin{equation} \label{eq:log2U_1}
        \E[ \log^2 |a + b U_1|] \leq \log^2 \max \{|a|, |b|\} + \frac{\pi^2}{12}.
    \end{equation}

    We now proceed by induction in $N$ to show the zero set of a nonzero multi-affine polynomial on $\T^N$ has Haar measure zero and its logarithm is square integrable (i.e., its logarithm is an element of  $L^2(\T^N)$, where we view $\T^N$ with the product Haar measure). 
    For $N = 0$ the result is trivial, and the $N = 1$ case follows from \eqref{eq:log2U_1} and the calculations above.
    Assume the statements hold true for all nonzero multi-affine polynomials in $N-1$ variables and decompose 
    \[ f(u_1, \ldots, u_N) = f_1(u_1, \ldots, u_{N-1}) + u_N f_2(u_1, \ldots, u_{N-1}), \]
    where 
    \[ f_1(u_1, \ldots, u_{N-1}) = \sum_{S \subset [N-1]} a_S u_S, \quad f_2(u_1, \ldots, u_{N-1}) = \sum_{S \subset [N-1]} a_{S \cup \{N\}} u_S. \]
    At least one of $f_1, f_2$ is a nonzero polynomial. 
    Choose one of $f_1, f_2$ that is not identically zero and call it $g$. 
    By the induction hypothesis, 
    \[ \log |g(\cdot)| \in L^2(\T^{N-1}) \]
    and $g \neq 0$ almost everywhere.  
    It follows that 
    \[ m(u_1, \ldots, u_{N-1}) = \max \{|f_1(u_1, \ldots, u_{N-1})|, |f_2(u_1, \ldots, u_{N-1})| \} > 0 \]
    almost everywhere, where we take the equality to be the definition of $m$. 
    In addition, by the induction hypothesis, at least one of the nonzero polynomials $f_1, f_2$ is nonzero almost everywhere. 
    Since the one-variable linear polynomial 
    \[ u \mapsto f_1(u_1, \ldots, u_{N-1}) + u f_2(u_1, \ldots, u_{N-1}) \]
    has at most one zero on $\T$ for almost every choice of $(u_1, \ldots, u_{N-1}) \in \T^{N-1}$, it follows from Fubini's theorem that the zero set of $f$ in $\T^N$ has product Haar measure zero. 

    For notational simplicity, set $V = (U_1, \ldots, U_{N-1})$.
    Conditional on $V$, $f$ has the form
    \[ f(V, U_N) = f_1(V) + f_2(V) U_N, \]
    and so by \eqref{eq:log2U_1}
    \begin{equation} \label{eq:log2U_N}
        \E\left[ \log^2 |f(V, U_N)| \middle| V \right] \leq \log^2 |m(V)| + \frac{\pi^2}{12}. 
    \end{equation}
    It remains to show $\log m(\cdot) \in L^2(\T^{N-1})$.
    Since $f_1$ and $f_2$ are continuous on the compact set $\T^{N-1}$, there exists a constant $C' > 1$ so that $\log_+ m(v) \leq \log_+ C'$ for all $v \in \T^{N-1}$.
    Since $r \mapsto \log_- r = \max\{-\log r, 0\}$ is decreasing, we find
    \[ \log_- m(v) \leq \log_- |g(v)|, \quad v \in \T^{N-1}. \]
    The right-hand side is square integrable by the induction hypothesis, and we conclude that $\log m(\cdot) \in L^2(\T^{N-1})$.
    Returning to \eqref{eq:log2U_N}, we find
    \[ \E[ \log^2 |f(U_1, \ldots, U_N)|] < \infty. \]

    We now proceed by a second induction in $N$ to prove \eqref{eq:expectationf}.
    The base case follows from \eqref{eq:logU_1}. 
    Assume \eqref{eq:expectationf} holds for all nonzero multi-affine polynomials in $N-1$ variables. 
    Let $S \subset [N]$ be such that $a_S \neq 0$.
    We again decompose 
    \[ f(V, U_N) = f_1(V) + U_N f_2(V). \]
    By \eqref{eq:logU_1}, 
    \[ \E\left[ \log |f(V, U_N)| \middle| V \right] = \log \max \{|f_1(V)|, |f_2(V)|\}. \]
    If $N \notin S$, then $a_S$ is a coefficient of $f_1$ and 
    \begin{align*}
        \E [\log |f(V, U_N)|] &= \E \log \max \{|f_1(V)|, |f_2(V)|\} \\
        &\geq \E [\log |f_1(V)|] \\
        &\geq \log |a_S|,
    \end{align*}
    where the last inequality follows from the induction hypothesis applied to $f_1$.
    If $N \in S$, then $a_S$ appears as a coefficient in $f_2$ and a similar argument gives 
    \begin{align*}
        \E[ \log |f(V, U_N)|] \geq \E [\log |f_2(V)|] \geq \log |a_S| 
    \end{align*}
    by the induction hypothesis applied to $f_2$. 
    Since the result is true for any $S \subset [N]$ with $a_S \neq 0$, we conclude that
    \[ \E [\log |f(V, U_N)|] \geq \log \max_{S \subset [N]} |a_S|, \]
    as desired. 

    We now prove \eqref{eq:logvarbnd}.
    For each $j \in [N]$, we define $U^{(j)} = (U_1, \ldots, U_{j-1}, U_{j+1}, \ldots, U_N)$ to contain all of the independent uniform random variables, except for the $j$-th.
    Fixing $U^{(j)}$, the multi-affine assumption implies
    \[ f(U_1, \ldots, U_N) = f_{1,j}(U^{(j)}) + U_j f_{2, j}(U^{(j)}). \]
    Almost surely the two coefficients $f_{1,j}(U^{(j)}), f_{2, j}(U^{(j)})$ are not both zero by the induction proof previously given.
    Thus, we obtain the one-variable variance bounds
    \[ \Var\left(\log |f(U_1, \ldots, U_N)| \middle| U^{(j)} \right) \leq \frac{\pi^2}{12}, \quad j \in [N] \]
    by applying \eqref{eq:logU_1} and \eqref{eq:log2U_1}. 
    Therefore, by the Efron--Stein inequality (see Theorem 3.1 in \cite{MR3185193}), we conclude that 
    \[ \Var \left( \log |f(U_1, \ldots, U_N)| \right) \leq N \frac{\pi^2}{12}, \]
    as required. 

    Finally, \eqref{eq:chebyshev-lower-bound} follows from \eqref{eq:expectationf}, \eqref{eq:logvarbnd}, and an application of Chebyshev's inequality. 
\end{proof}

We say $\beta \in (t, 1]$ is \emph{good} if $Q_0(\beta) > 0$ or if $\beta = 1$.
We let $G_t$ be the set of good values: 
\[ G_t = \{ \beta \in (t, 1) : Q_0(\beta) > 0 \} \cup \{1\}. \]
We include $\beta = 1$ to ensure the set is nonempty. 
We provide the following refinement of Lemma \ref{lem:vt-bt}.
\begin{lemma} \label{lem:Gt}
    Letting $p = \mu_0(\{0\})$, we have 
    \[ G_t = \begin{cases}
        (\max\{p,t\}, 1] & \text{ if } p < 1, \\
        \{1\} & \text{ if } p = 1. 
    \end{cases}\]
    In addition, for any $r > 0$, 
    \begin{equation*} 
        v_t(r) = \sup_{\beta \in G_t} B_t(\beta, r), 
    \end{equation*}
    where $B_t$ is defined in \eqref{def:Bt}. 
\end{lemma}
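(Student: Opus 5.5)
The plan has two parts: first identify $G_t$ explicitly from the quantile function, then show that restricting the supremum in Lemma \ref{lem:vt-bt} to $G_t$ loses nothing. For the first part, $Q_0(\beta)>0$ holds iff $F_0(r)<\beta$ for some $r>0$. Since $F_0$ is right-continuous with $F_0(0)=p$, this is equivalent to $\beta>p$. Indeed, if $\beta>p$, right-continuity gives some $r>0$ with $F_0(r)<\beta$, so $Q_0(\beta)\ge r>0$. Conversely, if $\beta\le p$, then $F_0(0)=p\ge\beta$ and $Q_0(\beta)=0$. Hence for $p<1$, $G_t=(\max\{p,t\},1)\cup\{1\}=(\max\{p,t\},1]$. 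For $p=1$ the set $(t,1)\cap(p,1)$ is empty, so $G_t=\{1\}$.

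For the identity, Lemma \ref{lem:vt-bt} gives $v_t(r)=\sup_{[t,1]}B_t(\beta,r)$, and $G_t\subset[t,1]$ gives $\sup_{G_t}B_t\le v_t(r)$ at once. For the reverse inequality I would reuse the minimizer from the proof of Lemma \ref{lem:vt-bt}: there $v_t(r)=B_t(\beta_\ast,r)$ with $\beta_\ast=t\rho_\ast/(\rho_\ast-r)>t$ and $F_0(\rho_\ast-)\le\beta_\ast\le F_0(\rho_\ast)$. Note that $\beta_\ast\le F_0(\rho_\ast)\le 1$. It then suffices to show that $\beta_\ast\in G_t$ whenever $\beta_\ast<1$, which is equivalent to $\beta_\ast>p$.

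Suppose instead $\beta_\ast\le p$. Because $\rho_\ast>r>0$, we have $F_0(\rho_\ast-)\ge F_0(0)=p\ge\beta_\ast$. Combined with $F_0(\rho_\ast-)\le\beta_\ast$, this forces $p=\beta_\ast$ and $\mu_0$ has no mass in $(0,\rho_\ast)$. If $p=1$, then $\beta_\ast=1\in G_t$ and we are done. If $p<1$, I would argue that this is inconsistent with $\rho_\ast$ being a minimizer. On $(0,\rho_\ast)$ there is no mass, so $L_0(\rho)=p\log\rho+\int_p^1\log Q_0$ for $\rho$ slightly below $\rho_\ast$. Thus $J_r(\rho)=p\log\rho-t\log(\rho-r)+\mathrm{const}$ there, with derivative $p/\rho-t/(\rho-r)$. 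At $\rho_\ast$ this derivative vanishes exactly, since $p=\beta_\ast=t\rho_\ast/(\rho_\ast-r)$. The second derivative of $p\log\rho-t\log(\rho-r)$ is $-p/\rho^2+t/(\rho-r)^2$, and at $\rho_\ast$ it equals $(t/(\rho_\ast-r)^2)(1-t/p)$, which is positive because $p=\beta_\ast>t$. So there is no contradiction from convexity alone. I would therefore switch to a direct continuity argument in $\beta$, which avoids the degenerate case entirely.

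This continuity argument is the step I expect to be the main obstacle. The function $\beta\mapsto B_t(\beta,r)$ is continuous on $(t,1]$ as an extended-real-valued function: the first four terms are continuous, and $\int_\beta^1\log Q_0$ is continuous in $\beta$ wherever it is finite. Moreover $\int_\beta^1\log Q_0=-\infty$ exactly when $\beta<p$, since then $\log Q_0=-\infty$ on a set of positive measure. Now take $\beta_\ast=p<1$, where $B_t(p,r)$ is finite because $Q_0>0$ on $(p,1)$ and $\int_p^1\log Q_0>-\infty$ follows from $v_t(r)$ being finite. Then $B_t(\beta,r)\to B_t(p,r)$ as $\beta\downarrow p$, and every such $\beta$ lies in $G_t=(p,1]$ because $p>t$. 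Hence $\sup_{G_t}B_t\ge B_t(p,r)=v_t(r)$. The remaining work is to check this right-continuity carefully via dominated or monotone convergence of $\int_\beta^1\log Q_0$, using that $\log Q_0$ is integrable near $1$ because $L_0$ is finite. With that in hand, the case $\beta_\ast>p$ is immediate and the lemma follows.
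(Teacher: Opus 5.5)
Your argument is correct. The identification of $G_t$ is exactly the paper's: right-continuity of $F_0$ at $0$ gives $Q_0(\beta)>0$ if and only if $\beta>p$. For the supremum identity, however, you take a different route.

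The paper works only with the formula for $B_t$. It observes that $B_t(\beta,r)=-\infty$ for $t\le\beta<p$. So the only non-good point that could matter is the endpoint $\beta=\max\{p,t\}$ (when $p<1$), and that value is a right-limit of $B_t$ along good $\beta$.

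You instead reuse the attained maximizer $\beta_\ast=t\rho_\ast/(\rho_\ast-r)$ from the proof of Lemma~\ref{lem:vt-bt}. Since $\beta_\ast>t$, and $F_0(\rho_\ast-)\ge p$ forces $\beta_\ast\ge p$, the single bad case is $\beta_\ast=p\in(t,1)$. You close that case with the same right-limit argument.

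\textbf{Comparison.} Your version never has to look at $\beta=t$ or at $\beta<p$. On the other hand, it relies on the internal objects $\rho_\ast,\beta_\ast$ of the earlier proof rather than on the statement of Lemma~\ref{lem:vt-bt}, and the paper's version is shorter.

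\textbf{The convexity detour.} You can simply delete it. Your suspicion that it cannot work is right, because $\beta_\ast=p$ really occurs. For example, take an atom of mass $p\in(t,1)$ at $0$ plus mass $1-p$ spread uniformly on a circle of radius $R>\max\{pr/(p-t),\,r/(1-t)\}$. This gives $\rho_\ast=pr/(p-t)$ and hence $\beta_\ast=p$.

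\textbf{A minor overstatement.} The claim that $\int_\beta^1\log Q_0(u)\,du=-\infty$ exactly when $\beta<p$ is too strong. It is guaranteed for $\beta<p$, but for general $\mu_0$ it can also be $-\infty$ at $\beta=p$. This does not affect your proof, for two reasons. Here the integral is finite, because $Q_0\ge\rho_\ast$ on $(p,1)$. And in any case, monotone convergence gives $\int_\beta^1\log Q_0\,du\to\int_p^1\log Q_0\,du$ as $\beta\downarrow p$ in the extended sense.
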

\begin{proof}
    In view of Lemma \ref{lem:vt-bt}, after identifying the set $G_t$, it will suffice to show 
    \begin{equation} \label{eq:vt-Gt}
        \sup_{\beta \in [t, 1]} B_t(\beta, r) = \sup_{\beta \in G_t} B_t(\beta, r). 
    \end{equation}
    We note that $B_t(1, r)$ is finite, and hence neither supremum could be $-\infty$. 

    We claim that $Q_0(u) = 0$ if and only if $u \leq p$. 
    Indeed, if $u \leq p$, then $F_0(0) = p \geq u$, so $Q_0(u) = 0$.
    Conversely, if $u > p$, right continuity of $F_0$ at zero implies there exists $\delta > 0$ so that $F_0(\delta) < u$. 
    In other words, $Q_0(u) \geq \delta > 0$. 
    In particular, this means the set $G_t$ is either of the form $(\max\{p, t\}, 1]$ or $\{1\}$, where the latter case only arises when $p = 1$.

    Fix $r > 0$, and consider $B_t(\beta, r)$. 
    If $t \leq \beta < p$, it follows that $\log Q_0(u) = -\infty$ for all $u \in (\beta, p)$, and so $B_t(\beta, r) = -\infty$.
    Such a value of $\beta$ cannot contribute to the supremum.  

    All $\beta > \max\{p, t\}$ are good.  
    Therefore, if $1 > p \geq t$, then $B_t(p, r)$ can be approximated by good values: $B_t(p, r) = \lim_{\beta \downarrow p} B_t(\beta, r)$. 
    Similarly, the endpoint $\beta = t$ can also be approximated by good values when $p < t$: $B_t(t, r) = \lim_{\beta \downarrow t}B_t(\beta, r)$. 
    We conclude that \eqref{eq:vt-Gt} holds, and the proof is complete. 
\end{proof}

Recall that $R_1, \ldots R_n$ are the radii of $X_1, \ldots, X_n$. 
Write the order statistics of the random sample $R_1, \ldots, R_n$ as 
\[ R_{n,(1)} \leq \cdots \leq R_{n,(n)}. \]

\begin{lemma} \label{lem:order-stats}
Fix $\beta \in (0, 1)$ so that $Q_0(\beta) > 0$. 
For any sequence of positive integers $(\alpha_n)$ satisfying 
\[ \frac{\alpha_n}{n} \longrightarrow 1 - \beta, \]
we have
\[ \frac{1}{n} \sum_{j=n-\alpha_n+1}^n \log R_{n,(j)} \longrightarrow \int_\beta^1 \log Q_0(u) \d u \]
almost surely as $n \to \infty$. 
\end{lemma}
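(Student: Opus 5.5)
We will use the quantile coupling. Let $\xi_1, \xi_2, \ldots$ be i.i.d.\ uniform on $(0,1)$. By Proposition~\ref{prop:quantile}, $(R_j)$ has the same law as $(Q_0(\xi_j))$, and since $Q_0$ is non-decreasing the order statistics satisfy $R_{n,(j)} = Q_0(\xi_{n,(j)})$, where $\xi_{n,(1)} \le \cdots \le \xi_{n,(n)}$ are the uniform order statistics. The statement is an almost sure statement about the law of the sequence $(R_j)$, so we may assume this coupling. Set $h = \log Q_0$ on $[\beta',1)$, where $\beta' < \beta$ is chosen with $Q_0(\beta') > 0$. Such a $\beta'$ exists: by the proof of Lemma~\ref{lem:Gt}, $Q_0(u) > 0$ if and only if $u > p = \mu_0(\{0\})$, and $\beta > p$, so any $\beta' \in (p, \beta)$ works. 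On $[\beta', 1)$, $h$ is non-decreasing and bounded below by $\log Q_0(\beta') > -\infty$. It is integrable, because $\int_0^1 \log_+ Q_0(u)\,du = \E[\log_+ R_1] < \infty$ by \eqref{eq:l+m}.

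The main idea is to compare the trimmed sum with a plain i.i.d.\ sum through an indicator. Write
\[ \frac{1}{n}\sum_{j=n-\alpha_n+1}^n \log R_{n,(j)} = \frac1n \sum_{j=1}^n h(\xi_j)\,\mathbf 1\{\xi_j > \beta\} + E_n, \]
where $E_n$ accounts for the mismatch between the top $\alpha_n$ order statistics and those with $\xi_j > \beta$. By the strong law of large numbers, the main sum converges almost surely to $\int_\beta^1 h(u)\,du$. For the error, let $M_n = |\{j : \xi_j > \beta\}|$. By the strong law, $M_n/n \to 1-\beta$ almost surely, so $|M_n - \alpha_n| = o(n)$. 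The mismatch terms are those order statistics $\xi_{n,(j)}$ with index $j$ between $n-\alpha_n$ and $n-M_n$. By Glivenko--Cantelli these all lie in $[\beta-\delta, \beta+\delta]$ eventually, for any fixed $\delta > 0$, almost surely. Taking $\delta$ small so that $\beta - \delta \ge \beta'$, each such term satisfies $|h| \le \max\{|\log Q_0(\beta')|, |\log Q_0(\beta+\delta)|\} =: C_\delta < \infty$. Hence $|E_n| \le C_\delta |M_n - \alpha_n|/n \to 0$ almost surely.

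The only delicate point is this boundedness of $h$ near $\beta$. It is exactly where the hypothesis $Q_0(\beta) > 0$ enters: it ensures that $\log Q_0$ is finite on a neighborhood of $\beta$ (to the left via $\beta'$, to the right via monotonicity and finiteness of $Q_0$ on $[0,1)$). Without it the mismatched terms could be $-\infty$. The large terms near $u = 1$ cause no trouble, since they appear identically in both sums and are controlled by the strong law via integrability. Combining the two limits gives the claim almost surely under the coupling, and hence for the original sample.
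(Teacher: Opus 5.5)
Your proof is correct, and it takes a genuinely different route from the paper's.

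\textbf{The paper's route.} The paper works directly with the given sample. It truncates to $Y_j=\max\{\log R_j,\log a\}$ for some $0<a<Q_0(\beta)$, and shows that the top $\alpha_n$ radii eventually exceed $a$, so the truncation does not affect the trimmed sum. It then combines Glivenko--Cantelli with the SLLN for $|Y_j|$ to get $W_1$-convergence of the empirical law. Finally, it invokes the one-dimensional identity $W_1(\nu_n,\nu)=\int_0^1|Q_n-Q_{Y_1}|\,du$ to pass to the limit in $\int_{\beta_n}^1 Q_n(u)\,du$.

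\textbf{Your route.} You use the quantile coupling $R_j=Q_0(\xi_j)$ to split the trimmed sum into two pieces:
\begin{itemize}
\item the i.i.d.\ sum $\frac1n\sum_j \log R_j\,\mathbf{1}\{\xi_j>\beta\}$, which the plain SLLN handles;
\item $|M_n-\alpha_n|=o(n)$ boundary terms. These are uniformly bounded because uniform order statistics with index near $\beta n$ lie near $\beta$, where $\log Q_0$ is finite.
\end{itemize}

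\textbf{Comparison.} Your argument is more elementary: it needs only the SLLN and Glivenko--Cantelli for uniforms. It avoids the Wasserstein characterization and the external quantile--$W_1$ identity. The price is the coupling step, which is legitimate, since the almost sure statement depends only on the joint law of $(R_j)$, as you note. There is also some bookkeeping you left implicit:
\begin{itemize}
\item The $\xi_j$ are almost surely distinct and different from $\beta$, so the indices with $\xi_j>\beta$ are exactly the top $M_n$ order statistics.
\item $\delta$ must also satisfy $\beta+\delta<1$, so that $Q_0(\beta+\delta)<\infty$.
\end{itemize}
The hypothesis $Q_0(\beta)>0$ plays the same role in both proofs: it makes $\log Q_0$ bounded below slightly to the left of $\beta$. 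The paper gets this through $F_0(a)<\beta$; you get it through the choice $\beta'\in(p,\beta)$.
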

\begin{proof}
    Recall the definition of the CDF $F_0$ of $R_1$ in \eqref{def:F0}. 
    Choose $0 < a < Q_0(\beta)$. 
    This implies $F_0(a) < \beta$, and hence by the strong law of large numbers
    \[ \frac{1}{n} |\{j \in [n] : R_j \leq a\}| \longrightarrow F_0(a) < \beta \]
    almost surely as $n \to \infty$. 
    Since $\frac{n-\alpha_n}{n} \to \beta$, this implies that almost surely for $n$ sufficiently large, 
    \begin{equation} \label{eq:Rnbig}
        R_{n, (n - \alpha_n + 1)} > a. 
    \end{equation}
    In other words, almost surely for $n$ sufficiently large, the $\alpha_n$ largest radii exceed $a$.

    Define the truncated random variables 
    \[ Y_j = \max \{\log R_j, \log a \}. \]
    Note that $Y_1, Y_2, \ldots$ are i.i.d. random variables that are bounded below and are integrable due to assumption \eqref{eq:lm}. 

    Let $\nu_n$ be the empirical measure of $Y_1, \ldots, Y_n$:
    \[ \nu_n = \frac{1}{n} \sum_{j=1}^n \delta_{Y_j}. \]
    We let $\nu$ be the law of $Y_1$ so that 
    \[ \nu_n \longrightarrow \nu \]
    weakly almost surely as $n \to \infty$ by the Glivenko--Cantelli theorem. 
    By another application of the strong law of large numbers, 
    \[ \int_\R |y| \d \nu_n(y) = \frac{1}{n} \sum_{j=1}^n |Y_j| \longrightarrow \E [|Y_1|] = \int_\R |y| \d \nu(y) \]
    almost surely. 
    By the standard characterization of the Wasserstein $L_1$ metric $W_1$ (see \cite{MR1964483} for a standard reference as well as \cite{MR4028181} for an overview and additional references), it follows that $W_1(\nu_n, \nu) \to 0$ almost surely as $n \to \infty$. 
    In one dimension, Proposition 1 from \cite{MR3608466} implies that
    \begin{equation} \label{eq:W1}
        \int_0^1 |Q_{n}(u) - Q_{Y_1}(u)| \d u = W_1(\nu_n, \nu) \longrightarrow 0 
    \end{equation}
    almost surely, where $Q_n$ is the quantile function of $\nu_n$ and $Q_{Y_1}$ is the quantile function of $Y_1$ (equivalently, $Q_{Y_1}$ is the quantile function of the distribution $\nu$).

    In view of Proposition \ref{prop:quantile}, for $u \geq \beta$, 
    \[ Q_0(u) \geq Q_0(\beta) > a, \]
    and hence by a direct calculation of the distribution of $Y_1$
    \begin{equation} \label{eq:QY1Q0}
        Q_{Y_1}(u) = \log Q_0(u), \quad u \in [\beta, 1]. 
    \end{equation}
    Let 
    \[ Y_{n, (1)} \leq \cdots \leq Y_{n, (n)} \]
    be the order statistics of the sample $Y_1, \ldots, Y_n$. 
    It follows that
    \begin{equation} \label{eq:YjQn}
        \frac{1}{n} \sum_{j=n - \alpha_n + 1}^n Y_{n, (j)} = \int_{\beta_n}^1 Q_{n}(u) \d u, 
    \end{equation}
    where $\beta_n = \frac{n - \alpha_n}{n}$. 
    We also have 
    \begin{align}  \label{eq:betanQconv}
        \left| \int_{\beta_n}^1 Q_{n}(u) \d u - \int_\beta^1 Q_{Y_1}(u) \d u \right| \leq \int_0^1 | Q_{n}(u) - Q_{Y_1}(u) | \d u + \left| \int_{\beta_n}^\beta Q_{Y_1}(u) \d u \right|.
    \end{align}
    The first term on the right-hand side converges almost surely to zero by \eqref{eq:W1}.
    We note that the second term will converge to zero since $Q_{Y_1}$ is integrable on $(0, 1)$. 
     Indeed, if $\xi_0$ is a random variable uniformly distributed on $(0, 1)$, then $Q_{Y_1}(\xi_0)$ has the same distribution as $Y_1$ (see, for example, Proposition 2 in \cite{MR3072795}), and so 
    \[ \int_0^1 |Q_{Y_1}(u) | \d u = \E[|Q_{Y_1}(\xi_0)|] = \E[ |Y_1|] < \infty. \]
    Thus, the second term on the right-hand side of \eqref{eq:betanQconv} converges to zero since $\beta_n \to \beta$. 
    In view of \eqref{eq:YjQn}, we conclude that 
    \[ \frac{1}{n} \sum_{j=n - \alpha_n + 1}^n Y_{n, (j)} \longrightarrow \int_\beta^1 Q_{Y_1}(u) \d u = \int_\beta^1 \log Q_0(u) \d u \]
    almost surely as $n \to \infty$, where the equality follows from \eqref{eq:QY1Q0}.
    By \eqref{eq:Rnbig}, almost surely for $n$ sufficiently large
    \[ \frac{1}{n} \sum_{j=n - \alpha_n + 1}^n Y_{n, (j)}  = \frac{1}{n} \sum_{j=n - \alpha_n + 1}^n \log R_{n, (j)}, \]
    and the proof is complete. 
\end{proof}

We can now complete the proof of Lemma \ref{lem:qnpointwise}. 
\begin{proof}[Proof of Lemma \ref{lem:qnpointwise}]
    By our choice of normalization, 
    \[ q_n(r) = e_{m_n}(r - X_1, \ldots, r - X_n) = e_{m_n}(r - R_1U_1, \ldots, r-R_nU_n), \]
    where $e_{m_n}$ is the elementary symmetric polynomial of degree $m_n$.
    Expanding out the products, we find
    \[ q_n(r) = \sum_{S \subset [n] : |S| \leq m_n} a_{n, S}(r) U_S, \]
    where 
    \[ U_S = \prod_{j \in S} U_j \]
    and $a_{n, S}(r)$ are the coefficients containing powers of $r$ and the radii $R_1, \ldots, R_n$.  
    Here, $|S|$ denotes the cardinality of the set $S$. 
    The coefficients can be written explicitly as 
    \[ a_{n, S}(r) = (-1)^{|S|} \binom{n - |S|}{m_n - |S|} r^{m_n - |S|} R_S, \quad S \subset [n], |S| \leq m_n, \]
    where 
    \[ R_S = \prod_{j \in S} R_j. \]
    To see this, we note that the coefficient $a_{n, S}$ is constructed by choosing $-R_jU_j$ for every $j \in S$ and then choosing $m_n - |S|$ copies of $r$ for the remaining variables. 

    Fix $\beta \in G_t$. For the moment, assume $\beta < 1$. 
    Choose a sequence $(\alpha_n)$ so that $\frac{\alpha_n}{n} \to 1- \beta$ as $n \to \infty$, and let $n$ be sufficiently large so that $\alpha_n \leq m_n$.
    Let $S_n$ be the set of indices corresponding to the $\alpha_n$ largest radii from the sample $R_1, \ldots, R_n$ (with ties broken arbitrarily).  
    Then $S_n$ is a random set; after we condition on $R_1, \ldots, R_n$ below, we will treat $S_n$ as deterministic.
    Stirling's formula gives
    \[ \frac{1}{n} \log \binom{n - \alpha_n}{m_n - \alpha_n} \longrightarrow \beta \log \beta - (\beta - t) \log (\beta - t) - t \log t \]
    as $n \to \infty$, and Lemma \ref{lem:order-stats} implies
    \[ \frac{1}{n} \sum_{j \in S_n} \log R_{j} = \frac{1}{n} \sum_{j=n-\alpha_n + 1}^n \log R_{n, (j)} \longrightarrow \int_\beta^1 \log Q_0(u) \d u \]
    almost surely. 
    Combining these results, we conclude that 
    \begin{equation} \label{eq:Btconv}
        \frac{1}{n} \log |a_{n, S_n}(r)| \longrightarrow B_t(\beta, r) 
    \end{equation}
    almost surely as $n \to \infty$, where $B_t$ is given in \eqref{def:Bt}. 
    When $\beta = 1$, we have (with $S_n = \emptyset$)
    \[ a_{n, \emptyset} = \binom{n}{m_n} r^{m_n}, \]
    and taking $\alpha_n = 0$, we again obtain \eqref{eq:Btconv}. 

    Let $\eps > 0$, and fix $r > 0$.
    Using Lemma \ref{lem:Gt}, choose $\beta \in G_t$ so that 
    \begin{equation} \label{eq:bt-vt}
        B_t(\beta, r) \geq v_t(r) - \eps. 
    \end{equation}
    Conditioning on the radii $R_1, R_2, \ldots$, Lemma \ref{lem:lower-bound} gives\footnote{The assumptions of Lemma \ref{lem:lower-bound} are satisfied since $q_n(r)$ is clearly multi-affine in $U_1, \ldots, U_n$, and when $r > 0$, its constant coefficient is nonzero. In addition, $a_{n, S_n}$ is nonzero almost surely for $n$ sufficiently large by \eqref{eq:Rnbig}. }, for any $\eps > 0$, 
    \[ \Prob_{U_1, \ldots, U_n}(\log |q_n(r)| < \log |a_{n, S_n}(r)| - \eps n ) \leq \frac{C}{\eps^2 n}, \]
    where $C > 0$ is an absolute constant.
    Combining this with \eqref{eq:Btconv} and \eqref{eq:bt-vt}, we arrive at 
    \[ \lim_{n \to \infty} \Prob \left( \frac{1}{n} \log |q_n(r)| \geq v_t(r) - 2 \eps \right) = 1. \]
    In view of \eqref{eq:limsupupper} and the fact that $\eps > 0$ was arbitrary, we conclude that 
    \[ \frac{1}{n} \log |q_n(r)| \longrightarrow v_t(r) \]
    in probability as $n \to \infty$.
    This completes the proof.
\end{proof}

\bibliographystyle{abbrv}
\bibliography{bib}

\end{document}